\numberwithin{equation}{section}
\newcommand{\eps}{\varepsilon}
\def\R{\mathbb R}
\def\C{\mathbb C}
\def\CC{\widehat{\mathbb C}}
\def\N{\mathbb N}
\def\arg{\operatorname{arg}}
\def\conn{\operatorname{conn}}
\def\crit{\operatorname{crit}}
\def\length{\operatorname{length}}
\def\dist{\operatorname{dist}}
\def\modulus{\operatorname{mod}}
\def\diam{\operatorname{diam}}
\def\ann{\operatorname{ann}}
\newtheorem{lemma}{Lemma}[section]
\newtheorem{thm}{Theorem}[section]
\newtheorem*{thma}{Theorem A}
\theoremstyle{definition}
\theoremstyle{remark}
\newtheorem{remark}{Remark}[section]
\title
[Boundary of Fatou components] {On the uniform perfectness of the
boundary of multiply connected wandering domains}
 \subjclass{37F10 (primary), 30D05 (secondary)}
\author{Walter Bergweiler}\thanks{The first author is supported
by a Chinese Academy of Sciences Visiting Professorship for Senior
International Scientists, Grant No.\ 2010 TIJ10, the Deutsche
Forschungsgemeinschaft, Be 1508/7-1,  the EU Research Training
Network CODY and the ESF Networking Programme HCAA.}
\address{Mathematisches Seminar,
Christian--Albrechts--Universit\"at zu Kiel,
Lude\-wig--Meyn--Str.~4,
D--24098 Kiel,
Germany}
\email{bergweiler@math.uni-kiel.de}
\author{Jian-Hua Zheng}\thanks{The second author is supported by 
Grant No. 10871108 of NSF of
China.}
\address{Department of Mathematical Sciences, Tsinghua University,
100084, Beijing, P.\ R.\ China}
\email{jzheng@math.tsinghua.edu.cn}
\date{\today}
\begin{document}
\begin{abstract}
We investigate in which cases the boundary of a multiply
connected wandering domain of an entire function is uniformly perfect.
We give a general criterion implying that it is not
uniformly perfect. This criterion applies in particular to
examples of multiply connected wandering domains given
by Baker.  We also provide examples of infinitely connected
wandering domains whose boundary is uniformly perfect.
\end{abstract}

\maketitle

\section{Introduction and results}

The \emph{Fatou set} $F(f)$ of an  entire function $f$ is the subset
of the complex plane $\C$ where the iterates $f^n$ of $f$
form a normal family. Its complement is called the \emph{Julia set}
and denoted by $J(f)$; see~\cite{Bergweiler93} for an
introduction to and discussion
of these sets for transcendental functions.

The connected components of $F(f)$ are called \emph{Fatou
components}. For a Fatou component $U_0$ and $k\in\N$ there exists a
Fatou component $U_k$ containing $f^k(U_0)$. A~Fatou component $U_0$
is called a \emph{wandering domain} if $U_j\neq U_k$ for
$j\neq k$. While a famous theorem of Sullivan~\cite{Sullivan85} says
that rational functions do not have wandering domains, an example of
a transcendental entire function with wandering domains had been
constructed already before Sullivan's work by Baker~\cite{Baker76}.
The wandering domains in Baker's example are multiply connected.
Baker~\cite{Baker75} actually proved that multiply connected Fatou
components of transcendental entire functions are always wandering.

Baker's example~\cite{Baker76} was the function
\begin{equation}\label{1a}
f(z)=Cz^2
\prod_{k=1}^\infty\left(1+\frac{z}{r_k}\right),
\end{equation}
where $C$ is a constant and $(r_k)$ satisfies the recurrence
relation
\begin{equation}\label{1a1}
r_{k+1}=Cr_k^2\prod_{j=1}^{k}\left(1+\frac{r_k}{r_j}\right),
\end{equation}
with $r_1>1$ and $C>0$ chosen such that $C\exp(2/r_1)<1/4$ and
$Cr_1>1$, for example $C=1/(4e)$ and $r_1>4e$. Then $r_{k+1}\geq 2
r_k$ for all $k\in\N$
so that the product in~\eqref{1a} converges.
Baker showed that
$$f\left(\ann\left(0;a_k^2,\sqrt{a_{k+1}}\right)\right)\subset
\ann\left(0;a_{k+1}^2,\sqrt{a_{k+2}}\right)$$
for large~$k$,
where $\ann(a;r,R)=\{z\in\C:r<|z-a|<R\}$ for $0<r<R$ and $a\in\C$.
This implies that
$$
\ann\left(0;a_k^2,\sqrt{a_{k+1}}\right)\subset U_k
$$
for some multiply connected Fatou component $U_k$. In fact, Baker
had constructed the example and verified the above properties much
earlier~\cite{Baker63}, but in that paper the question whether the
$U_k$ are all different had remained open. It was only
in~\cite{Baker76} that he could prove that all the $U_k$ are
different and thus wandering domains.

Many properties of this example are typical for functions with
multiply connected Fatou components. We collect some of these
properties in the following theorem. Here $n(\gamma,a)$ denotes the
winding number of a curve $\gamma$ with respect to the point~$a$.
\begin{thma}
Let $f$ be a transcendental entire function with a multiply
connected wandering domain $U_0$ and, for $k\in\N$, let $U_k$ be the
component of $F(f)$ containing $f^k(U)$. Then we have the following:
\begin{itemize}
\item[(i)] $f^k|_{U_0}\to\infty$ as $k\to\infty$;
\item[(ii)] each $U_k$ contains a closed curve $\gamma_k$ satisfying
$n(\gamma_k,0)\neq 0$  for large $k$; in fact, if $\gamma_0$ is a
Jordan curve in $U_0$ which is not null-homotopic,  then
$\gamma_k=f^k(\gamma_0)$ has this property;
\item[(iii)] all $U_k$ are  bounded;
\item[(iv)]
each $U_k$ is contained in a bounded component of the complement of
$U_{k+1}$ for large~$k$;
\item[(v)] there exists sequences
$(r_k)$ and $(R_k)$ tending to $\infty$ such that
$\ann(0;r_k,R_k)\subset U_k$ for large $k$ and
$\lim_{k\to\infty}R_k/r_k= \infty$.
\end{itemize}
\end{thma}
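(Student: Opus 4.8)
These statements are essentially due to Baker \cite{Baker63,Baker75,Baker76}, and the plan is to assemble a proof along the following lines: prove (i) first; use it, together with classical estimates for the maximum and minimum modulus of entire functions, to obtain (v) and the boundedness in (iii); and finally deduce (ii) and (iv) by winding-number bookkeeping via the argument principle. Throughout, fix a Jordan curve $\gamma_0\subset U_0$ that is not null-homotopic in $U_0$, chosen so that the Jordan domain $H_0$ bounded by $\gamma_0$ contains a bounded component $K_0$ of $\C\setminus U_0$; then $\partial K_0\subset\partial U_0\subset J(f)$, so $H_0$ meets $J(f)$, say in a point $w_0$. Set $\gamma_k:=f^k(\gamma_0)\subset U_k$.

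For (i), first note that $\{f^k\}$ is normal on $U_0$ and that every limit function is constant: a non-constant limit $\phi$ would have open image, but for $w\in\phi(U_0)$ the pairwise distinct components $U_{n_k}$ would accumulate at $w$, forcing $w\in J(f)$, which has empty interior since $F(f)\neq\emptyset$. To rule out a finite constant $c$: if $f^{n_k}\to c$ locally uniformly on $U_0$, the maximum principle applied to the entire functions $f^{n_k}-c$ on $H_0$ (their moduli being small on $\gamma_0=\partial H_0$) shows that $\{f^{n_k}\}$ is bounded, hence normal, on $\overline{H_0}$; a subsequential limit agrees with $c$ on the open set $U_0\cap H_0$ and therefore equals $c$ throughout $H_0$, in particular on a disc $D$ about $w_0\in J(f)$. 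Now $D$ contains repelling periodic points from infinitely many distinct cycles (otherwise density of repelling cycles in $J(f)$ together with the fact that $J(f)$ is perfect would make $J(f)$ finite near $w_0$); picking $p_1,p_2\in D$ with disjoint cycles, each sequence $(f^{n_k}(p_i))_k$ lies in the finite cycle of $p_i$ and tends to $c$, so $c$ lies in both cycles --- impossible. Hence $\infty$ is the only limit function and $f^k|_{U_0}\to\infty$.

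For (v) and the boundedness in (iii) --- the technical heart --- one uses that, by (i), $\min_{\gamma_0}|f^k|\to\infty$, so $\gamma_k$ escapes and the topological annulus in $U_0$ between $\gamma_0$ and $\partial U_0$ is mapped ``towards $\infty$''. The crucial lemma, going back to Baker, is that the image under an entire function of a round annulus $\ann(0;r,R)$ of large modulus on which $|f|$ is large contains a round annulus $\ann(0;r',R')$ of still larger modulus: this is proved by comparing $\max_{|z|=\rho}\log|f(z)|$ and $\min_{|z|=\rho}\log|f(z)|$ (via Borel--Carath\'eodory/Harnack-type estimates, using that $\log|f|$ is harmonic away from zeros, together with the convexity of $\rho\mapsto\log\max_{|z|=\rho}|f|$ in $\log\rho$) and by counting zeros of $f$ inside the annulus with the argument principle to see that the image winds around $0$. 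Starting from a round sub-annulus of the annulus separating $\gamma_0$ from $\partial U_0$ and iterating, one obtains $r_k,R_k\to\infty$ with $R_k/r_k\to\infty$ and $\ann(0;r_k,R_k)\subset F(f)$ meeting $\gamma_k$, hence contained in $U_k$; this is (v). Boundedness (iii) is Baker's theorem \cite{Baker75} that multiply connected Fatou components of transcendental entire functions are bounded; alternatively, if $U_k$ were unbounded one could join $\gamma_k$ to $\infty$ within $U_k$, pull the configuration back by $f^k$, and contradict the bound on $f^k$ over $\overline{H_0}$ furnished by (i) and the maximum principle.

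Finally, for (ii): by the argument principle $n(\gamma_k,0)=n(f^k\circ\gamma_0,0)$ equals the number of zeros of $f^k$ in $H_0$ counted with multiplicity, so it suffices that $H_0$ meet $f^{-k}(0)$. Since $H_0$ is a neighbourhood of the Julia point $w_0$, its forward images $f^j(H_0)$ eventually cover every compact set avoiding the at most one exceptional value, so they eventually contain $0$; to upgrade ``infinitely many $k$'' to ``all large $k$'' one uses the nesting of the complementary holes $K_k$, which is part (iv). For (iv): by (v) the unbounded component of $\C\setminus U_{k+1}$ lies in $\{|z|\geq R_{k+1}\}$, while a companion estimate from the same inductive construction keeps $U_k$ inside $\{|z|<R_{k+1}\}$ for large $k$; combined with $U_k\cap U_{k+1}=\emptyset$ this forces the connected set $U_k$ into a single bounded component of $\C\setminus U_{k+1}$, which by (ii) is the one surrounding $0$. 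I expect the main difficulty to be the quantitative function theory behind (iii) and (v) --- showing that applying $f$ to a wide round annulus on which $f$ is large produces an even wider one while simultaneously controlling the outer radius of $U_k$ --- together with the fact that (ii), (iv) and (v) have to be carried out by a single induction on $k$, the nesting of the $K_k$ being precisely what converts ``infinitely many $k$'' into ``all large $k$''.
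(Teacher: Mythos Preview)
The paper does not prove Theorem~A at all; it is stated as a compendium of known results with references: (i) to T\"opfer~\cite{Toepfer}, (ii) and (iii) to Baker~\cite[Theorem~3.1]{Baker84}, (iv) as ``an easy consequence of~(ii)'', and (v) to Zheng~\cite{Zheng06}. Your attribution of everything to Baker is inaccurate --- in particular (v), the annulus-widening statement, is not in Baker's papers but is the main result of~\cite{Zheng06}.

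Your sketch is broadly reasonable, but the logical ordering is off and this creates a real gap. You propose to prove (v) before (ii), and then to use (v) (plus an unproved ``companion estimate'' bounding $U_k$ inside $\{|z|<R_{k+1}\}$) to get (iv), and finally to use (iv) to upgrade (ii) from ``infinitely many $k$'' to ``all large $k$''. But Zheng's proof of (v) \emph{uses} (ii): the annulus-widening mechanism needs the curves $\gamma_k$ to surround $0$ before one can compare minimum and maximum modulus on them and invoke the Hadamard/Harnack estimates you mention. Moreover, the paper (and Baker) derive (iv) from (ii), not the other way round: once $n(\gamma_k,0)\neq 0$ and $n(\gamma_{k+1},0)\neq 0$ with $\gamma_k\cap\gamma_{k+1}=\emptyset$, the argument-principle identity $n(\gamma_{k+1},a)=\#\{z\in H_k:f(z)=a\}$ shows that the interior of $\gamma_{k+1}$ contains the interior of $\gamma_k$ and hence $U_k$; no separate ``companion estimate'' is needed. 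Likewise, the step from ``infinitely many $k$'' to ``all large $k$'' in (ii) is done directly: once $\gamma_{k_0}$ surrounds $0$ and $\min_{\gamma_k}|z|\to\infty$, the interior of $\gamma_k$ eventually contains zeros of $f$ (which exist in abundance since an entire function with finitely many zeros has no multiply connected Fatou components), and the argument principle gives $n(\gamma_{k+1},0)\neq 0$. Your final remark that (ii), (iv), (v) should be ``carried out by a single induction'' is thus misleading --- historically and logically, (ii) comes first, then (iv) follows cheaply, and (v) is a separate, later, and harder theorem.

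Your argument for (i) via two disjoint repelling cycles in $H_0$ is correct and pleasant, if slightly more elaborate than the standard one.
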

Here (i) is a simple observation apparently first made by
T\"opfer~\cite[p.~67]{Toepfer}, and (ii) and (iii) are due to
Baker~\cite[Theorem~3.1]{Baker84}. (He states only the first part
of~(ii), but his proof gives the second one.) Next, (iv) is an easy
consequence of~(ii). Finally, (v) was proved in~\cite{Zheng06}.

Baker~\cite[Theorem~2]{Baker88} modified his construction to show that
there exists an entire function $f$ of the form
\begin{equation}\label{1c}
f(z)=C^2
\prod_{k=1}^\infty\left(1+\frac{z}{r_k}\right)^2
\end{equation}
which has a multiply connected Fatou component of infinite
connectivity; cf.\ section~\ref{baker-ex}. In the opposite
direction, Kisaka and Shishikura~\cite{KS} constructed examples
where the connectivity is finite, thereby answering a question of
Baker. In fact, they showed that for every $N\in\N$ there exists an
entire function with an $N$-connected Fatou component.

Next we recall that a closed subset $K$ of $\C$ is called
\emph{uniformly perfect} if there exists $c>0$ such that
if $a\in K$ and $0<r<\diam(K)$, then  $\ann(a;cr,r)\cap K\neq
\emptyset$. An equivalent condition is that there exists
$C>0$ such that the modulus
$\modulus(A)$ of
any annulus $A$ separating
two components of $K$ satisfies $\modulus(A)\leq C$.
Here by an annulus we mean a doubly connected domain.
The concept of uniform perfectness was introduced by Beardon
and Pommerenke~\cite{Beardon78,Pommerenke79} and has found
many applications in complex analysis.

It was proved independently by Ma\~n\'e and da Rocha~\cite{Mane},
Hinkkanen~\cite{Hinkkanen} and Eremenko~\cite{Eremenko92} that Julia
sets of rational functions are uniformly perfect. On the other hand,
the sequence $(a_k)$ in Baker's example~\eqref{1a} satisfies
$\lim_{k\to\infty}\sqrt{a_{k+1}}/a_k^2=\infty$ and this implies that $J(f)$
is not uniformly perfect for this function~$f$. In fact, it follows
from Theorem~A, part~(v), that the Julia set of an entire function
with a multiply connected Fatou component is never  uniformly
perfect.

Here we study the question when the boundary of a multiply connected
Fatou component is uniformly perfect. Clearly, this is the case for
Fatou components of finite connectivity, so it suffices to consider
infinitely connected Fatou components.

For a domain $U\subset\C$  and $a\in \CC\setminus U$ we denote by
$C(a,U)$ the component of $\CC\setminus U$ that contains~$a$, and we
put $C(a,U)=\emptyset$ if $a\in U$. The union of $U$ and its bounded
complementary components is denoted by~$\widetilde{U}$. Thus
$\widetilde{U}=\C\setminus C(\infty,U)$. The set of critical points
of $f$ is denoted by $\crit(f)$.
\begin{thm}\label{thm1}
Let $f$ be an entire transcendental function with a multiply
connected Fatou component $U_0$ and put $U_k=f^k(U_0)$ for $k\in\N$.
Denote by $l_k$ the number of critical points $c$ of $f$ in
$\widetilde{U_k}$ for which $f(c)\notin C(0,U_{k+1})$,
by $m_k$ the number of zeros of $f$ in $C(0,U_k)$ and by
$n_k$ the number of zeros of $f$ in $\widetilde{U_k}$.
Suppose that
\begin{equation}\label{1e}
l_k<m_k
\end{equation}
for all large~$k$. Suppose also that there are infinitely many $k$
such that
$U_k\cap \crit(f)\neq\emptyset$ and such that  $U_{k+1}$ contains an
annulus $A_{k+1}$ which separates  $U_{k+1}\cap f(\crit(f))$
 from $C(0,U_{k+1})$
 and satisfies
\begin{equation}\label{1f}
\frac{\modulus(A_{k+1})}{n_k-m_k}\to\infty.
\end{equation}
Then $\partial U_0$ is not uniformly perfect.
\end{thm}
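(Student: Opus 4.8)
\emph{Strategy of the proof.}
Since uniform perfectness of a closed set $K$ fails exactly when there are annuli of arbitrarily large modulus separating two components of $K$, and since any \emph{essential} annulus in $U_0$ (one whose core curve is not null-homotopic in $U_0$) has one complementary component containing $C(\infty,U_0)$ and the other containing a bounded complementary component of $U_0$, hence separates two distinct components of $\partial U_0$, it suffices to produce essential annuli in $U_0$ of arbitrarily large modulus.

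I would first collect the following consequences of Theorem~A, valid for large $k$: $U_k$ is bounded, $C(0,U_k)$ is a bounded complementary component of $U_k$, the orbit is nested in the sense $\widetilde{U_j}\subseteq C(0,U_{j+1})$, and $f$ maps $\widetilde{U_j}$ properly onto $\widetilde{U_{j+1}}$ with degree $n_j$, the restriction having degree $m_j$ on the component of $f^{-1}(C(0,U_{j+1})^{\circ})$ in $\widetilde{U_j}$ about $C(0,U_j)$. Riemann--Hurwitz then gives that $f$ has $n_j-1$ critical points (with multiplicity) in $\widetilde{U_j}$, at least $m_j-1$ of them in that central component; so at most $n_j-m_j$ critical points, and exactly $n_j-m_j$ zeros, of $f$ lie in $\widetilde{U_j}\setminus C(0,U_j)$, the latter sitting inside finitely many ``satellite'' components of $f^{-1}(C(0,U_{j+1})^{\circ})\cap\widetilde{U_j}$, each contained in a bounded complementary component of $U_j$ other than $C(0,U_j)$. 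In particular $U_k\cap\crit(f)\neq\emptyset$ forces $n_k-m_k\ge 1$, so $\modulus(A_{k+1})\to\infty$ along the chosen subsequence. I would record these as preliminary lemmas, proved in the spirit of Theorem~A.

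Now fix $k$ in the special subsequence. Because $A_{k+1}\subseteq U_{k+1}$ separates $U_{k+1}\cap f(\crit(f))$ from $C(0,U_{k+1})$, it contains no critical value of $f$, so $f^{-1}(A_{k+1})\cap U_k$ is an unramified cover of $A_{k+1}$; let $A^{\dagger}_k$ be its component encircling one of the satellite holes of $U_k$. By the argument principle the degree of $f\colon A^{\dagger}_k\to A_{k+1}$ equals the number of zeros of $f$ inside that hole, hence is at most $n_k-m_k$, and $A^{\dagger}_k$ is essential in $U_k$. One then descends the orbit, setting $A^{\dagger}_{k+1}=A_{k+1}$ and letting $A^{\dagger}_j$ be the component of $f^{-1}(A^{\dagger}_{j+1})\cap U_j$ about the corresponding satellite hole of $U_j$. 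The crux — and the step I expect to be the main obstacle — is to show that each such pull-back is again unramified and of satellite type: a ramification point would be a critical point $c\in U_j$ with $f(c)\in A^{\dagger}_{j+1}$, but by the nesting $c\in U_j\subseteq C(0,U_{j+1})$ lies on the central side, and the hypothesis $l_j<m_j$, used at \emph{every} level $j<k$, is precisely what keeps the non-central critical points of $f$ in $\widetilde{U_j}$ and their images on the correct side of $A^{\dagger}_{j+1}$, so no such $c$ occurs; propagating this ``separation'' property down the orbit, together with the accompanying degree bookkeeping, is the technical heart. (The finitely many levels at which Theorem~A does not yet apply are handled directly and contribute only a bounded factor below.) One thereby obtains an essential annulus $A^{\dagger}_0\subseteq U_0$ and an unramified cover $f^{k+1}\colon A^{\dagger}_0\to A_{k+1}$, whose degree $p_k$ — again by the argument principle, the zeros of $f^{k+1}$ inside $A^{\dagger}_0$ being in bijection, along the orbit, with the $n_k-m_k$ non-central zeros of $f$ at level $k$ (no extra zeros being picked up at the intermediate levels, by the same $l_j<m_j$ bookkeeping) — satisfies $p_k\le c_0\,(n_k-m_k)$ with $c_0$ independent of $k$.

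Consequently
\[
\modulus\bigl(A^{\dagger}_0\bigr)=\frac{\modulus(A_{k+1})}{p_k}\ \ge\ \frac{1}{c_0}\cdot\frac{\modulus(A_{k+1})}{n_k-m_k}\ \longrightarrow\ \infty
\]
as $k\to\infty$ along the subsequence. Hence $U_0$ contains essential annuli of arbitrarily large modulus, and therefore $\partial U_0$ is not uniformly perfect.
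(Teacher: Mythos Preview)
Your overall strategy---produce an essential annulus in $U_k$ not surrounding $0$ with modulus at least $\modulus(A_{k+1})/(n_k-m_k)$, then transport it to $U_0$---is the paper's, and your first step is essentially right (the paper obtains its annulus $Y_k$ slightly differently, by first pulling back an auxiliary annulus $B_{k+1}\ni f(c_k)$ and using Riemann--Hurwitz to locate a satellite complementary region of the preimage). The gap is the descent. You assert that $l_j<m_j$ ``keeps the non-central critical points of $f$ in $\widetilde{U_j}$ and their images on the correct side of $A^{\dagger}_{j+1}$, so no such $c$ occurs.'' But $l_j$ is merely the \emph{number} of critical points $c\in\widetilde{U_j}$ with $f(c)\notin C(0,U_{j+1})$; the hypothesis bounds this number by $m_j-1$, it does not make it zero. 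Critical values may therefore land in $A_{j+1}^\dagger$, the pull-back can ramify, and nothing prevents the degree from multiplying at each of the $k$ levels; the claimed bijection between zeros of $f^{k+1}$ inside $A_0^\dagger$ and the $n_k-m_k$ satellite zeros at level $k$ is unsubstantiated. (The clause ``by the nesting $c\in U_j\subseteq C(0,U_{j+1})$ lies on the central side'' concerns the location of $c$, not of $f(c)$, and is a non sequitur.)

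The paper uses $l_j<m_j$ for a different purpose, via a pigeonhole lemma (Lemma~\ref{lemma2}): for a proper degree-$d$ map between simply connected domains and an annulus $A$ in the target avoiding the critical values, if the number $p$ of critical points with image in $\widetilde{A}$ satisfies $d>2p$, then some component of $f^{-1}(A)$ is mapped \emph{univalently} onto~$A$. Applying this to $f\colon\widetilde{U_{j-1}}\to\widetilde{U_j}$ (degree $n_{j-1}$) and the current annulus at level $j$ (whose filled interior avoids $C(0,U_j)$, so $p\le l_{j-1}$), one has $n_{j-1}\ge m_{j-1}+l_{j-1}>2l_{j-1}\ge 2p$, yielding a univalent preimage in $U_{j-1}$. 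This preimage again does not surround $0$, since its filled interior contains no zero of $f$ while $C(0,U_{j-1})$ contains $m_{j-1}\ge 1$ of them; hence the same argument applies at the next step. Iterating down to level $0$ gives an essential annulus $Y_k^k\subset U_0$ with $\modulus(Y_k^k)=\modulus(Y_k)\to\infty$. Thus the role of $l_j<m_j$ is not to exclude bad critical points but to guarantee that among the many preimage components of $\widetilde{A}$ at least one is free of them.
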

It is not difficult to see that $f$ has $n_k-m_k$ critical points in
$\widetilde{U_k}\setminus C(0,U_k)$. In particular,  we have
$n_k>m_k$ if $U_k$ contains a critical point. Thus the denominator
in~\eqref{1f} is non-zero.
Note that $l_k\leq n_k-m_k$ so that~\eqref{1e} is satisfied in
particular if
\begin{equation}\label{1g}
n_k< 2m_k.
\end{equation}
We note that $n_k-m_k$ and hence $l_k$ is bounded
in the examples~\eqref{1a} and~\eqref{1c}.
Thus~\eqref{1e} and~\eqref{1g} are satisfied there.

While the hypothesis of Theorem~\ref{thm1} seem somewhat
complicated, the result works well for specific examples.
We use Theorem~\ref{thm1} to show that in Baker's original example
 of a multiply connected wandering domain
(i.e., the example given by~\eqref{1a} and~\eqref{1a1}), the
connectivity is infinite and the boundary is not uniformly perfect.
The question whether the connectivity of this domain is finite or
infinite had been raised by Baker~\cite{Baker88} and by Kisaka and
Shishikura~\cite{KS}; the question was repeated
in~\cite[p.~2946]{BergweilerOW} and~\cite[p.~312]{Schleicher10}.

More generally, we give a fairly complete discussion of functions of
order $0$ where the moduli $r_k$ of the zeros satisfy a recursion
formula similar to~\eqref{1a1}, with initial values chosen such that
$r_{k+1}\geq 2r_k$ for large~$k$. Our result also shows that an
infinitely connected wandering domain may have a uniformly perfect
boundary.
\begin{thm}\label{thm4}
Let $(r_k)$ and $(P_k)$ be sequences of positive numbers
satisfying
$r_{k+1}\geq 2r_k$ for large $k$,
\begin{equation}\label{thm4a}
\lim_{k\to\infty}\sqrt[k]{P_k}=1
\end{equation}
and
$$
r_{k+1}= P_k r_k^N \prod_{j=1}^{k}\left(1+\frac{r_k}{r_j}\right),
$$
for some non-negative integer~$N$ and all~$k$.
Let $C\in\C\setminus \{0\}$,
let $(a_k)$ be a sequence of complex numbers satisfying $|a_k|=r_k$
and define the entire function $f$ by
\begin{equation}\label{thm4c}
f(z)=Cz^N
\prod_{k=1}^\infty\left(1-\frac{z}{a_k}\right).
\end{equation}
Then there exist $K\in \N$ and a sequence $(\varepsilon_k)$ of
positive real numbers tending to $0$ such that
\begin{equation}\label{thm4d}
f\left(\ann\left(0;(1+\eps_k)r_k,(1-\eps_k)r_{k+1}\right)\right)\subset
\ann\left(0;(1+\eps_{k+1})r_{k+1},(1-\eps_{k+1})r_{k+2}\right)
\end{equation}
for $k\geq K$.

We denote, for $k\geq K$,
the Fatou component containing
$\ann\left(0;(1+\eps_k)r_k,(1-\eps_k)r_{k+1}\right)$ by $U_k$. If
$$
\limsup_{k\to\infty}{kP_k} >\frac{|C|}{2e}
\quad\text{or}\quad
\liminf_{k\to\infty}{kP_k} <\frac{|C|}{2e}
$$
then $U_k$ is infinitely connected for all~$k$, and if
$$
\limsup_{k\to\infty}{kP_k} >\frac{|C|}{2e}
\quad\text{or}\quad
\liminf_{k\to\infty}{kP_k} =0,
$$
then $\partial U_k$ is not uniformly perfect for all~$k$.
If
\begin{equation}\label{thm4g}
\limsup_{k\to\infty}{kP_k} <\frac{|C|}{2e}
\quad\text{and}\quad
\liminf_{k\to\infty}{kP_k} >0,
\end{equation}
then $\partial U_k$ is uniformly perfect for all~$k$.
\end{thm}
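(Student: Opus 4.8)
The plan is to establish the statement in three stages: first the invariance relation \eqref{thm4d}, then the two non--uniform--perfectness (and infinite connectivity) conclusions via Theorem~\ref{thm1}, and finally the positive uniform perfectness statement under \eqref{thm4g} by a direct modulus estimate. For \eqref{thm4d} I would follow Baker's computation: one estimates $|f(z)|$ for $|z|=\rho$ with $(1+\eps_k)r_k\le\rho\le(1-\eps_k)r_{k+1}$ by splitting the product $\prod(1-z/a_j)$ into the factors with $j\le k$, the factor $j=k+1$ (if $\rho$ is close to $r_{k+1}$ one must be careful, which is exactly why the $\eps_k$ are inserted), and the tail $j\ge k+1$. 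Using $r_{j+1}\ge 2r_j$ the tail product is close to $1$, the head product is comparable to $\rho^k\prod_{j=1}^k r_j^{-1}$ times a bounded factor, and the hypothesis \eqref{thm4a} together with the recursion for $r_{k+1}$ lets one show that $\log|f(z)|$ lies between $\log((1+\eps_{k+1})r_{k+1})$ and $\log((1-\eps_{k+1})r_{k+2})$ for a suitable sequence $\eps_k\to 0$ and all large~$k$. The key input is that $\sqrt[k]{P_k}\to1$ forces the ``error'' in $\log r_{k+1}$ to be $o(k)$, which is what allows the radii to be nested with only a mild loss.

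Next, to apply Theorem~\ref{thm1} I need to identify the quantities $l_k,m_k,n_k$ and the separating annulus $A_{k+1}$ for this $f$. Since $f(z)=Cz^N\prod(1-z/a_k)$ with $|a_k|=r_k$ and $r_{k+1}\ge 2r_k$, the zeros of $f$ are $0$ (multiplicity $N$) and the $a_k$; from \eqref{thm4d} one sees $C(0,U_k)$ is the bounded region inside roughly $\{|z|\le r_k\}$, so $m_k=N+\#\{j:r_j\le r_k\}$ grows like $k$, while $\widetilde{U_k}$ reaches out to about $r_{k+1}$, so $n_k-m_k$ counts the zeros with modulus between $r_k$ and $r_{k+1}$; because $r_{j+1}\ge2r_j$ there is exactly one such zero ($a_{k+1}$, or at most boundedly many), hence $n_k-m_k$ is bounded, and likewise $l_k\le n_k-m_k$ is bounded, so \eqref{1e} holds. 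The critical points of $f$ lie between consecutive zeros in modulus, so $U_k\cap\crit(f)\ne\emptyset$ for all large $k$; the set $f(\crit(f))\cap U_{k+1}$ sits at modulus comparable to the geometric position of those critical points, which is $o(r_{k+1})$ away from both $r_{k+1}$ and $r_{k+2}$ on a logarithmic scale precisely when $kP_k$ is bounded away from $|C|/2e$. A computation of $\modulus(A_{k+1})$ — the annulus in $U_{k+1}$ separating $U_{k+1}\cap f(\crit(f))$ from $C(0,U_{k+1})$ — shows $\modulus(A_{k+1})$ is comparable to $\log(kP_k/(|C|/2e))^{-1}$ type quantities (more precisely, it is large exactly when $\liminf kP_k=0$ or when $kP_k$ overshoots $|C|/2e$ along a subsequence, and \eqref{thm4d} makes the inner boundary of $A_{k+1}$ jump by a factor $\to\infty$ there). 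Since the denominator $n_k-m_k$ in \eqref{1f} is bounded, \eqref{1f} reduces to $\modulus(A_{k+1})\to\infty$ along a subsequence, which is exactly what the two hypotheses $\limsup kP_k>|C|/2e$ and ($\liminf kP_k<|C|/2e$ or $=0$) deliver; this gives infinite connectivity and, in the sharper case, non--uniform--perfectness by Theorem~\ref{thm1}.

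For the converse, assume \eqref{thm4g}, so $0<\liminf kP_k\le\limsup kP_k<|C|/2e$. Here I would show directly that any annulus $A$ separating two components of $\partial U_0$ has uniformly bounded modulus. By the functional relation \eqref{thm4d} every point of $\partial U_0$ eventually lands (under iteration) in one of the round annuli $\ann(0;(1+\eps_k)r_k,(1-\eps_k)r_{k+1})\subset U_k$; a separating annulus for $\partial U_0$ is pushed forward by $f^k$ to a doubly connected domain in $\CC\setminus\partial U_k$ separating two complementary components of $U_k$, and since $f$ has degree roughly $m_k$ (as a proper map onto a large disc) near these sets, the modulus can decrease by at most a bounded factor under each application — more carefully, one controls $\modulus$ under the polynomial--like restrictions of $f$ using the bound on the number of critical points in the relevant regions, which is where $l_k<m_k$ and the boundedness of $n_k-m_k$ re-enter. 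The crucial quantitative point is that under \eqref{thm4g} the ``gaps'' in $\partial U_k$ — namely the ratios (outer radius)/(inner radius) of the round annuli $\ann(0;r_k(1+\eps_k),r_{k+1}(1-\eps_k))$ and of the complementary annuli between $\widetilde{U_k}$ and $C(0,U_{k+1})$ — are all bounded, because $r_{k+1}/r_k$, while $\ge2$, grows only like $P_k r_k^{N-1}\prod(1+r_k/r_j)$ with $kP_k$ bounded, forcing $\log(r_{k+1}/r_k)=O(k)$ and hence, after distributing over the $\sim k$ zeros inside, a bounded modulus per ``layer''. I expect the main obstacle to be this last step: making precise that uniform perfectness of $\partial U_0$ is equivalent to a uniform bound on the moduli of the round annuli $\ann(0;r_k,r_{k+1})$ (equivalently on $\log(r_{k+1}/r_k)/k$), and transferring this between $\partial U_0$ and $\partial U_k$ through the iterates in a quantitatively controlled way — the forward direction (large $\modulus$ somewhere $\Rightarrow$ not uniformly perfect) is the content of Theorem~\ref{thm1}, but the backward direction requires an honest distortion estimate for $f$ near $\partial U_k$, and getting the constants uniform in $k$ is the delicate part.
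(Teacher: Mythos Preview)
Your plan for the invariance relation \eqref{thm4d} is essentially the paper's approach and should go through. For the infinite connectivity and non-uniform-perfectness conclusions you are also on the right track, but you are vague at exactly the point that matters: you need the precise location of the critical point and the asymptotic of the critical value. The paper pins this down via Rouch\'e's theorem applied to $f'/f$ on the annulus $\overline{\ann\bigl(0;\sqrt{r_{k-1}r_k},\sqrt{r_kr_{k+1}}\bigr)}$, obtaining a unique critical point $c_k=(1-1/(k+N+\delta_k))a_k$ with $\delta_k\to 0$, and then the key asymptotic
\[
|f(c_k)|\sim \frac{|C|}{2e\,kP_k}\,r_{k+1}.
\]
This formula is what makes the threshold $|C|/(2e)$ appear and tells you whether $f(c_k)$ lands in $U_k$ (when $kP_k$ is large, so $|f(c_k)|<(1-\eps)r_{k+1}$) or in $U_{k+1}$ (when $kP_k$ is small, so $|f(c_k)|>(1+\eps)r_{k+1}$). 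Your description ``the critical points lie between consecutive zeros in modulus'' is not enough: the function is genuinely complex-valued and you need the Rouch\'e argument.

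Your approach to the uniform perfectness conclusion under \eqref{thm4g} has a genuine gap. The claim ``$\log(r_{k+1}/r_k)=O(k)$'' is false: the recursion gives (see \eqref{p4a}) $r_{k+1}/r_k \asymp (r_k/r_{k-1})^{k}$ up to factors that are $o(1)$-th powers, so $\log(r_{k+1}/r_k)$ grows factorially, not linearly. Hence the ``bounded modulus per layer'' heuristic collapses. More fundamentally, pushing a separating annulus $A\subset U_0$ forward by $f^k$ gives no upper bound on $\modulus(A)$: moduli can \emph{increase} under holomorphic images, so controlling things at level $k$ does not bound them at level~$0$. The paper avoids this entirely by using the hyperbolic-length characterisation of uniform perfectness (Lemma~\ref{lemma3}) together with the Schwarz--Pick inequality, which gives the \emph{correct} monotonicity
\[
\length(\sigma_K,U_K)\ge \length(\sigma_k,U_k)
\]
for $\sigma_k=f^{k-K}(\sigma_K)$. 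One then iterates a non-null-homotopic curve $\sigma_K$ until the first $k$ with $n(\sigma_k,0)=0$ but $n(\sigma_{k+1},0)\neq 0$; this forces $n(\sigma_k,a_k)\neq 0$. The quantitative input is not a modulus bound on round annuli but a local picture of $U_k$ near $a_k$: under \eqref{thm4g} one shows that $U_k$ misses both a disc $\overline{D(a_k,\delta r_k/k)}$ and a segment $[\tfrac12 a_k,(1-\tau/k)a_k]$, while containing the surrounding annular region. After the affine rescaling $z\mapsto k(z-a_k)/a_k$ this becomes a fixed domain independent of~$k$, and the hyperbolic length of the image of $\sigma_k$ there is bounded below by a constant. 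This is the mechanism you are missing; the ``honest distortion estimate'' you anticipate is replaced by Schwarz--Pick plus an explicit geometric description of $\partial U_k$ near the single zero $a_k$.
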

Noting that $P_k=C$ and thus $\lim_{k\to\infty} k P_k=\infty$
in Baker's first example of a wandering domain,
we deduce that
this wandering domain is infinitely connected and that its boundary
is not uniformly perfect.

In principle a similar discussion could be done for functions of the
form
$$f(z)=Cz^N \prod_{k=1}^\infty\left(1-\frac{z}{a_k}\right)^2,$$
but here we will only prove that in Baker's example~\eqref{1c} of an
infinitely connected wandering domain the boundary is also not
uniformly perfect; see section~\ref{baker-ex}.

In the  functions considered in Theorem~\ref{thm4}, as well as in
the Baker's example~\eqref{1c}, we have $n_k-m_k\leq 2$ and thus
the condition~\eqref{1f} just says that $\modulus(A_{k+1})\to\infty$.
 In the following example
 we have $n_k-m_k\to\infty$.
 The example shows that~\eqref{1f} is best possible in
some sense.

\begin{thm}\label{thm2}
Let $q_0$ be an even integer and put $a_0=\exp(q_0/2)$.
Define sequences $(q_k)$ and $(a_k)$ recursively by
$$q_{k+1}=\frac32 q_k^2
\quad\text{ and }\quad
a_{k+1}=\exp q_k.$$
Then
$$f(z)=z^2\prod_{k=0}^\infty\left(1-\frac{z}{a_k}\right)^{q_k}$$
defines an entire function $f$ and
if $q_0$ is sufficiently large,
then $f$ has an infinitely connected Fatou component
$U_0$ whose boundary is uniformly perfect.

Moreover, \eqref{1e} is satisfied,
each $U_k$ contains exactly one critical
point $c_k$ and $U_{k+1}$ contains an annulus $A_{k+1}$
separating $f(c_k)$ from $C(0,U_{k+1})$
such that
\begin{equation}\label{1h}
\frac{\modulus(A_{k+1})}{n_k-m_k}\geq c
\end{equation}
for some positive constant $c$ and all~$k$.
\end{thm}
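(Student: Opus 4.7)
The approach follows the template of Theorem~\ref{thm4}: first construct the Fatou components via estimates on $|f|$, then verify the combinatorial conditions \eqref{1e} and \eqref{1h}, and finally attack uniform perfectness, which is the new ingredient.

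\emph{Construction of $U_k$.} First verify $f$ is entire from the super-exponential decay $q_k/|a_k|\le q_k/e^{q_{k-1}}$. For $|z|=r$ with $|a_{k-1}|\ll r\ll |a_k|$, the factors with $j\ge k$ contribute $O(1)$ and those with $j<k$ contribute $\sum_{j<k}q_j\log(r/|a_j|)$, giving
\[
\log|f(z)|=Q_k\log r-\sum_{j<k}q_j\log|a_j|+O(1),\qquad Q_k:=2+\sum_{j<k}q_j.
\]
Choose $s_k<t_k$ in this range; for $q_0$ large enough a Baker-type computation yields $f(\ann(0;s_k,t_k))\subset\ann(0;s_{k+1},t_{k+1})$. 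The Fatou component $U_k\supset\ann(0;s_k,t_k)$ is multiply connected, hence a wandering domain.

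\emph{Critical structure, \eqref{1e}, \eqref{1h}.} The logarithmic derivative $f'/f=2/z-\sum_j q_j/(a_j-z)$ shows the critical points of $f$ are the $a_j$'s (each of multiplicity $q_j-1$, lying in Fatou components of the super-attracting basin of $0$, not in any $U_k$) together with a simple ``free'' critical point $c_k$ in each gap $(|a_{k-1}|,|a_k|)$, which a Rouch\'e argument places inside $B_k:=\ann(0;s_k,t_k)\subset U_k$. Hence $c_k$ is the unique critical point of $f$ in $U_k$, and Riemann--Hurwitz applied to the proper branched cover $f:\widetilde{U_k}\to\widetilde{U_{k+1}}$ forces $U_k$ to be infinitely connected. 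A direct zero count gives $m_k=2+\sum_{j<k}q_j$ and $n_k-m_k=q_k$: the zero at $a_k$ lies in a bounded hole of $U_k$ disjoint from $C(0,U_k)$, contributing the extra $q_k$. Asymptotic evaluation yields $\log|f(c_k)|\sim\tfrac12 q_{k-1}^2$, so $|f(c_k)|$ sits strictly between $s_{k+1}$ and $t_{k+1}$; the annulus $A_{k+1}:=\ann(0;s_{k+1},|f(c_k)|)$ lies in $U_{k+1}$, separates $f(c_k)$ from $C(0,U_{k+1})$, and has
\[
\modulus(A_{k+1})=\frac{1}{2\pi}\log\frac{|f(c_k)|}{s_{k+1}}\asymp q_{k-1}^2\asymp q_k=n_k-m_k,
\]
establishing \eqref{1h}. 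Because $f(a_j)=0\in C(0,U_{k+1})$ for every $a_j\in\widetilde{U_k}$ and $|f(c_j)|\ll s_{k+1}$ for $j<k$ by the same asymptotics, the only critical point of $f$ in $\widetilde{U_k}$ whose image escapes $C(0,U_{k+1})$ is $c_k$; hence $l_k=1<m_k$ for large $k$, which is \eqref{1e}.

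\emph{Uniform perfectness -- the main obstacle.} To show $\partial U_0$ is uniformly perfect I would use the characterization via bounded moduli of separating annuli. Given any annulus $A\subset\CC\setminus\partial U_0$ separating two components of $\partial U_0$, its image $f^k(A)\subset U_k$ is generically an annulus separating boundary components of $U_k$, with $\modulus(f^k(A))=e\cdot\modulus(A)$ where $e$ is the local degree of $f^k|_A$. The explicit geometry of $U_k$ -- sandwiched between scales $s_k$ and $|a_k|$ -- yields the companion upper bound $\modulus(B)\le Cq_k$ for every separating annulus $B$ inside $U_k$, complementing the lower bound in \eqref{1h}. The heart of the argument is to show that, for a suitable $k=k(A)$, the degree $e$ grows at least as fast as $q_k/\modulus(A)$: this follows from a careful enumeration of the zeros of $f^k$ enclosed by $A$, each of which contributes to the degree via the successive branched coverings $f:\widetilde{U_j}\to\widetilde{U_{j+1}}$. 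Combining the bounds then gives $\modulus(A)\le C$ uniformly, so $\partial U_0$ is uniformly perfect. The matching of the rates $\modulus(A_{k+1})\asymp n_k-m_k\asymp q_k$ in both directions -- unlike in Theorem~\ref{thm1} -- is exactly what makes this approach succeed, and rigorously implementing the zero-counting step is the main technical obstacle.
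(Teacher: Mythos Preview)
Your treatment of the construction of the wandering annuli, the location of the critical points, and the verification of \eqref{1e} and \eqref{1h} runs along the same lines as the paper (up to an index shift and an inessential discrepancy in the constant for $\log|f(c_k)|$; in your indexing the paper gets $\tfrac34 q_{k-1}^2\lesssim\log f(c_k)\lesssim\tfrac32 q_{k-1}^2$).

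The uniform-perfectness argument, however, has a genuine gap. Your plan is to push a separating annulus $A\subset U_0$ forward and balance an upper bound $\modulus(f^k(A))\le Cq_k$ against the degree $e$ of $f^k|_A$. Two problems arise. First, forward images of annuli need not be annuli: as soon as some $f^j(A)$ contains the critical point $c_j\in U_j$ the covering picture collapses, and you give no reason why this can be avoided. Second, the inequality ``$e\ge q_k/\modulus(A)$'' is circular (it already encodes the conclusion $\modulus(A)\le C$), and the phrase ``enumeration of the zeros of $f^k$ enclosed by $A$'' does not supply the missing mechanism---in particular you never say how $k$ is chosen or why the count lines up with $q_k$.

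The paper sidesteps both issues by working with curves and hyperbolic length (Lemma~\ref{lemma3}) rather than annuli and moduli. Take a non-null-homotopic curve $\sigma_0\subset U_0$, set $\sigma_k=f^k(\sigma_0)$; these are always curves, and Schwarz--Pick gives $\length(\sigma_0,U_0)\ge\length(\sigma_k,U_k)$ for free. By Theorem~A(ii) there is a \emph{first} $k$ with $n(\sigma_k,0)\ne 0$. Then $n(\sigma_{k-1},0)=0$, so $\sigma_{k-1}$ must wind around some zero of $f$; since $a_0,\dots,a_{k-1}\in C(0,U_{k-1})$, it can only wind around the high-multiplicity zero $a_k$ sitting in a bounded hole of $U_{k-1}$ (paper's indexing), whence $|n(\sigma_k,0)|\ge q_k$. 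Now $U_k\subset\ann(1,4a_{k+1})$ with $\log a_{k+1}=q_k$, so Lemma~\ref{lemma4} gives
\[
\length(\sigma_k,U_k)\ \ge\ \frac{2\pi^2\,|n(\sigma_k,0)|}{\log(4a_{k+1})}
\ =\ \frac{2\pi^2 q_k}{q_k+\log 4}\ \ge\ \pi^2.
\]
The exact identity $\log a_{k+1}=q_k$ is the ``matching of rates'' you allude to, and it enters in precisely this way: the multiplicity of the enclosed zero equals the logarithmic size of the next annulus. Replacing your annulus--degree heuristic by this winding-number argument closes the gap cleanly.
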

There is an interesting difference between the example given by
Theorem~\ref{thm2} and the examples obtained from Theorem~\ref{thm4}
by choosing $(P_k)$ such that~\eqref{thm4g} holds. Our proofs will
show that in the example given by Theorem~\ref{thm2} the
complementary components of the wandering domain cluster only at the
``outer boundary'' while in the examples obtained from
Theorem~\ref{thm4} they cluster at the ``inner boundary''. We
discuss this in more detail in a remark at the end of
section~\ref{proofthm2}.

\section{Preliminaries}
We denote the connectivity of a domain $G$ in $\C$ by $\conn(G)$;
that is, $\conn(G)$ is the number of connected components of
$\CC\setminus G$. The following result is known as the
Riemann-Hurwitz formula; see, e.g.,~\cite[p.~7]{Steinmetz}.
\begin{lemma}\label{lemma1}
Let $G$ and $H$ be domains in
$\C$ and let $f:G\to H$ be a proper holomorphic map of degree $d$
with $m$ critical points, counting multiplicity.
Then
$$\conn(G)-2=d(\conn(H)-2)+m.$$
\end{lemma}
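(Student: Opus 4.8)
The plan is to reduce the statement to elementary facts about the Euler characteristic $\chi$. Recall that a domain $D\subset\C$ with $\conn(D)=n<\infty$ is homotopy equivalent to the $n$-times punctured sphere, so $\chi(D)=2-n$. In the situation of the lemma all domains involved are of finite topological type: this is the standing hypothesis of the cited reference, and in every application in the present paper $f$ is the restriction of an entire function, so that $f$ has only finitely many critical points in $G$ and, since $H$ is then finitely connected, $G$ is finitely connected as well (under the proper map $f$ each complementary component of $G$ in $\CC$ is carried into a complementary component of $H$, with at most $d$ of the former over each of the latter). Granting this, the claimed identity $\conn(G)-2=d(\conn(H)-2)+m$ is equivalent to
$$
\chi(G)=d\,\chi(H)-m ,
$$
so this is what I would establish.

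First I would fix the branched-covering picture. As $f$ is proper and holomorphic, each fibre $f^{-1}(w)$ is compact and discrete, hence finite; $f$ is open (non-constant holomorphic) and closed (proper), so $f(G)=H$; and the number of preimages of a point counted with multiplicity is independent of $w$, equal to $d$. Let $V:=f(\{z\in G:f'(z)=0\})$ be the (finite) set of critical values, $H':=H\setminus V$ and $G':=f^{-1}(H')=G\setminus f^{-1}(V)$. Then $f\colon G'\to H'$ is a genuine (unbranched) covering of degree $d$.

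Now the two Euler-characteristic counts. Multiplicativity of $\chi$ under the finite covering $f\colon G'\to H'$ gives $\chi(G')=d\,\chi(H')$. Deleting one point from a surface lowers $\chi$ by $1$ (a disc has $\chi=1$, a punctured disc $\chi=0$), whence $\chi(H')=\chi(H)-\#V$. On the source side, over a critical value $w$ the fibre $f^{-1}(w)$ has exactly $d-m_w$ points, where $m_w=\sum_{p\in f^{-1}(w)}(\operatorname{mult}_p f-1)$ is the ramification over $w$; since $m=\sum_{w\in V}m_w$ this yields $\#f^{-1}(V)=d\,\#V-m$ and therefore $\chi(G')=\chi(G)-d\,\#V+m$. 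Inserting both into $\chi(G')=d\,\chi(H')$ and cancelling the term $d\,\#V$ gives $\chi(G)=d\,\chi(H)-m$, as required. (Alternatively one can bypass the multiplicativity statement: triangulate $H$ — finitely, as it is of finite type — with every point of $V$ a vertex, and pull the triangulation back by $f$; each face and each edge of $H$ has exactly $d$ preimages, a non-critical vertex has $d$ preimages, and a critical value $w$ has $d-m_w$ preimages, and comparing $\#\{\mathrm{vertices}\}-\#\{\mathrm{edges}\}+\#\{\mathrm{faces}\}$ on the two sides again gives $\chi(G)=d\,\chi(H)-m$.)

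The step I expect to be the real obstacle is the first one — certifying that $f$ is a finite branched covering with finitely many critical points and that $G$ is of finite topological type. This is precisely where properness (rather than mere holomorphy) and the finite-connectivity hypothesis are used; once it is in place the remaining argument is the routine cell count above.
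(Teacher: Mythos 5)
The paper does not actually prove this lemma: it is recorded as the classical Riemann--Hurwitz formula with a pointer to Steinmetz's book, and the only additional content in the paper is the convention that the identity is to be read as ``both sides infinite'' when one of the domains is infinitely connected. Your Euler--characteristic argument is the standard textbook derivation (essentially the one in the cited reference): pass to the unbranched cover $G'\to H'$ of the complement of the critical values, use multiplicativity $\chi(G')=d\,\chi(H')$, and account for the punctures on each side via the fibre count $\#f^{-1}(w)=d-m_w$. The computation $\chi(G)=d\,\chi(H)-m$, translated back through $\chi(D)=2-\conn(D)$, is correct, and your alternative triangulation count is also the usual one. The one place where the write-up is loose is the preliminary remark that under a proper map ``each complementary component of $G$ in $\CC$ is carried into a complementary component of $H$'' --- $f$ is not defined on $\CC\setminus G$, so what you mean (boundary correspondence for proper maps, or equivalently that $\pi_1(G')$ is a finite-index, hence finite-rank, subgroup of the free group $\pi_1(H')$) should be spelled out if the finite-type reduction were being asked for in detail; but you flag this step explicitly as the real point to be certified, and the claim is true. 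You also do not address the infinite-connectivity convention of the lemma, though in every use in the paper both domains are of finite type, so this is harmless.
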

Here it is understood that if one of the domains is of infinite
connectivity, then so is the other one.

It follows from Theorem~A, part (iii), that if $f$ is an entire
function with a multiply connected wandering domain $U_0$ and if
$U_k$ is the component of $F(f)$ containing $f^k(U)$, then $f:U_k\to
U_{k+1}$ is a proper map. In particular, $U_{k}=f^k(U_0)$.

The following consequence of the Riemann-Hurwitz formula
can be found in~\cite[Lem\-ma~6]{Baker88} and~\cite[Theorem~A]{KS}.
\begin{lemma}\label{lemma1a}
 Let $f$ be a transcendental entire function with a multiply
connected wandering domain $U_0$.
If $\bigcup_{k=0}^\infty f^k(U)$ contains infinitely many critical points,
then $U_0$ is infinitely connected.
\end{lemma}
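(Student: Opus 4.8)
The plan is to argue by contradiction using the Riemann--Hurwitz formula (Lemma~\ref{lemma1}). Assume that $U_0$ has finite connectivity. Since $f\colon U_k\to U_{k+1}$ is a proper map, as recalled just before the statement, it has a finite degree $d_k\geq 1$ and only finitely many critical points; write $m_k$ for their number, counted with multiplicity. Lemma~\ref{lemma1} then gives
$$\conn(U_k)-2=d_k\bigl(\conn(U_{k+1})-2\bigr)+m_k$$
for every~$k$. As $d_k\geq 1$ and $m_k\geq 0$, the right-hand side is infinite whenever $\conn(U_{k+1})$ is, so finiteness of $\conn(U_k)$ forces finiteness of $\conn(U_{k+1})$; hence, by induction, every $U_k$ has finite connectivity.

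Next I would show that the connectivities are eventually non-increasing. By Theorem~A(ii), for all large $k$ the component $U_k$ contains a closed curve with non-zero winding number about $0$, so $U_k$ is not simply connected and $\conn(U_k)\geq 2$. For such $k$ the term $\conn(U_{k+1})-2$ is non-negative, so the identity above yields $\conn(U_k)\geq\conn(U_{k+1})$. Thus $(\conn(U_k))$ is, from some index on, a non-increasing sequence of integers that are at least $2$, hence eventually constant, say equal to $c\geq 2$ for $k\geq k_1$.

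Feeding this back into Riemann--Hurwitz, for $k\geq k_1$ we get $c-2=d_k(c-2)+m_k$, that is, $(d_k-1)(c-2)+m_k=0$. Both summands are non-negative, so $m_k=0$: for $k\geq k_1$ the component $U_k$ contains no critical point of $f$ at all. On the other hand, the finitely many components $U_0,\dots,U_{k_1-1}$ contain $m_0+\cdots+m_{k_1-1}<\infty$ critical points, and since distinct Fatou components are disjoint, $\bigcup_{k=0}^\infty f^k(U)$ contains only finitely many critical points --- contradicting the hypothesis. Hence $U_0$ is infinitely connected.

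The one genuine idea here is the monotonicity step: once all the $U_k$ are multiply connected their connectivities cannot grow, so they stabilise, after which the Riemann--Hurwitz identity leaves no room for further critical points. Everything else --- properness and finite degree of $f\colon U_k\to U_{k+1}$, and multiple connectivity of $U_k$ for large $k$ --- is quoted from the results recalled above, and the distinction between counting critical points with multiplicity and as a set is immaterial since both counts are finite. The only point requiring a little care is that Theorem~A(ii) provides multiple connectivity only for large~$k$, which is why the stabilisation must be phrased from some index onwards; the finitely many initial components contribute at most finitely many critical points in any case, so this causes no difficulty.
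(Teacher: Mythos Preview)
Your proof is correct. The paper itself does not prove this lemma but simply cites it from Baker and from Kisaka--Shishikura, describing it as a ``consequence of the Riemann--Hurwitz formula''; your argument is exactly the standard derivation these references give, so your approach coincides with the intended one.
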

It was shown in~\cite{BRS} that the converse
also holds: if $\bigcup_{k=0}^\infty U_k$ contains only
finitely many critical points, then
$U_0$ has finite connectivity.
We will not need
this result, but we mention that it shows that the hypothesis in
Theorem~\ref{thm1} that infinitely many $U_k$ contain a critical
point is automatically fulfilled if $U_0$ is infinitely connected.

\begin{lemma}\label{lemma2}
Let $G$ and $H$ be simply-connected domains and let $f:G\to H$
be a proper holomorphic map of degree~$d$.
Let $A$ be an annulus in $H$ which does not contain
critical values.
Denote by $p$ the number of critical points $c\in G$ for which
$f(c)\in\widetilde{A}$, counting multiplicities. If $d>2p$, then
$f^{-1}(A)$ has a component $B$ such that $f:B\to A$ is univalent.
In particular, $\modulus(B)=\modulus(A)$.
\end{lemma}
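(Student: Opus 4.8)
The plan is to decompose $f^{-1}(A)$ into its components and show that, unless one of them is mapped univalently onto $A$, there are more critical points over the ``filled-in'' annulus $\widetilde A$ than the bound $d>2p$ allows. The point of passing to $\widetilde A$ is that $A$ itself carries no critical values, so Riemann--Hurwitz applied over $A$ says nothing — but over $\widetilde A$ it does.

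First I would record the easy structural facts. Since $A$ contains no critical values and $f\colon G\to H$ is proper, the restriction $f\colon f^{-1}(A)\to A$ is a finite covering; let $B_1,\dots,B_s$ be its components and $e_i=\deg\!\big(f|_{B_i}\big)$, so $\sum_i e_i=d$, and each $B_i$ is a doubly connected domain which is conformally an $e_i$-fold covering of $A$, with $\modulus(B_i)=\modulus(A)/e_i$. If some $e_i=1$, take $B=B_i$: then $f|_B$ is a conformal homeomorphism onto $A$, so $\modulus(B)=\modulus(A)$ and we are finished. Hence I may assume $e_i\ge 2$ for every $i$ and aim for a contradiction with $d>2p$.

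Next I would pass to $\widetilde A$, the union of $A$ with its bounded complementary component; since $H$ is simply connected it has no bounded complementary components, so $\widetilde A$ is a simply connected domain contained in $H$. As $\widetilde A$ is open and $f$ is proper, $f\colon f^{-1}(\widetilde A)\to\widetilde A$ is again proper; let $W_1,\dots,W_t$ be its components and $E_j=\deg\!\big(f|_{W_j}\big)$, so $\sum_j E_j=d$. Each $W_j$ maps \emph{onto} $\widetilde A$, hence meets $f^{-1}(A)$ and so contains at least one $B_i$; therefore $E_j\ge e_i\ge 2$, which already gives $t\le d/2$. Now I would apply the Riemann--Hurwitz formula (Lemma~\ref{lemma1}) to $f\colon W_j\to\widetilde A$: since $\conn(\widetilde A)=1$ and $\conn(W_j)\ge 1$, the number $m_j$ of critical points of $f$ in $W_j$, counted with multiplicity, satisfies $m_j=\conn(W_j)-2+E_j\ge E_j-1$. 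Every such critical point maps into $\widetilde A$, and the $W_j$ are pairwise disjoint, so $p=\sum_j m_j\ge\sum_j(E_j-1)=d-t\ge d/2$, i.e.\ $d\le 2p$ — contradicting the hypothesis. Thus some $e_i$ equals $1$, and the lemma follows; the final assertion $\modulus(B)=\modulus(A)$ is just conformal invariance of the modulus.

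The genuinely delicate points here are topological bookkeeping rather than anything deep: that restricting the proper map $f$ to a component of the preimage of a subdomain yields a proper (in particular surjective) holomorphic map whose local degrees sum to $d$, and that $\widetilde A$ really is a simply connected domain sitting inside $H$. Once these are in place, the choice of $\widetilde A$ (so that Riemann--Hurwitz has content) together with the trivial inequality $t\le d/2$ does all the work.
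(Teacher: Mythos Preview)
Your proof is correct and follows essentially the same route as the paper: pass to the simply connected hull $\widetilde A$, look at the components of $f^{-1}(\widetilde A)$, and apply Riemann--Hurwitz to count critical points against degrees. The only cosmetic difference is that the paper uses the (unstated but standard) fact that each component $C_j$ of $f^{-1}(\widetilde A)$ is itself simply connected, giving the exact relation $\deg(f|_{C_j})=r_j+1$ and hence $d=k+p$, so that $d>2p$ forces $k>p$ and a pigeonhole yields a critical-point-free $C_j$; your argument reaches the same conclusion via the weaker inequality $\conn(W_j)\ge 1$ together with the preliminary step through $f^{-1}(A)$, which is a perfectly good (and arguably more self-contained) variant.
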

\begin{proof}
Let $C_1,\dots,C_k$ be the components of
$f^{-1}(\widetilde{A})$
and let $r_j$ be the number of critical points in~$C_j$.
Then $\sum_{j=1}^k r_j=p$.
Now
$f:C_j\to \widetilde{A}$ is a proper map and its  degree is
$r_j+1$ by the the Riemann-Hurwitz formula, since $C_j$
and $\widetilde{A}$ are simply connected.
Thus $d=\sum_{j=1}^k (r_j+1)= k+\sum_{j=1}^k r_j=k+p$.
By hypothesis, we have $d>2p$ and this implies that $k>p$.
Thus there exists $j$ such that $C_j$ contains no critical
point. Hence $f:C_j\to \widetilde{A}$ is univalent and
the conclusion follows.
\end{proof}
We denote the density of the hyperbolic metric in a hyperbolic
domain $U$ by $\varrho_U$ and the hyperbolic length of a curve
$\gamma$ in $U$ by $\length(\gamma,U)$. Thus
$\length(\gamma,U)=\int_\gamma \varrho_U(z)|dz|$. The following
result is well-known; see~\cite[Theorem~2.3]{Sugawa98}
or~\cite[Proposition~3]{Zheng01}.
\begin{lemma}\label{lemma3}
Let $U$ be a hyperbolic domain. Then $\partial U$ is
uniformly perfect if and only if there exists $\delta>0$
such that $\length(\gamma,U)\geq \delta$ for each curve $\gamma$
in $U$ which is not null-homotopic.
\end{lemma}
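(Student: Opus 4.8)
The plan is to prove the two implications separately, using the reformulation of uniform perfectness recorded above: $\partial U$ fails to be uniformly perfect exactly when there are ring domains $A_n$ in $U$ whose core curves separate two complementary components of $U$ (equivalently, two components of $\partial U$) and with $\modulus(A_n)\to\infty$; I will use the standard reduction that the separating ring domains in the reformulation may be taken inside $U$. Throughout, the bridge between ring domains and curves is the core curve of a ring: a closed curve in $U$ is non-null-homotopic exactly when it separates two components of $\C\setminus U$, and a ring domain $A\subset U$ does so iff its core curve does.

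For the forward implication (if $\partial U$ is not uniformly perfect, then short non-null-homotopic curves exist) I would take ring domains $A_n\subset U$ as above, map each conformally onto a round annulus $\{1<|w|<R_n\}$, and compute the hyperbolic length of the core circle $|w|=\sqrt{R_n}$ in that round annulus; it equals $\pi/\modulus(A_n)$ in the usual normalization. Transporting the core back to $U$ and using the Schwarz--Pick monotonicity $\varrho_U\le\varrho_{A_n}$ on $A_n$, one obtains a non-null-homotopic closed curve $\gamma_n\subset U$ with $\length(\gamma_n,U)\le \pi/\modulus(A_n)\to 0$. (This is the implication actually applied later in the paper.)

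For the converse (if arbitrarily short non-null-homotopic curves exist, then $\partial U$ is not uniformly perfect) I would start from closed curves $\gamma_n\subset U$, none null-homotopic, with $\length(\gamma_n,U)\to 0$, equip $U$ with its complete hyperbolic metric, and pass to the closed geodesic $\gamma_n^{\,*}$ in the free homotopy class of $\gamma_n$, which satisfies $\length(\gamma_n^{\,*},U)\le\length(\gamma_n,U)$. The key point is that once this length drops below the two-dimensional Margulis constant, the thick--thin decomposition of the hyperbolic surface $U$ forces $\gamma_n^{\,*}$ into a collar about a simple closed geodesic $\sigma_n$ (it cannot lie in a cusp neighbourhood, which contains no closed geodesic of positive length), and there $\gamma_n^{\,*}$ must coincide with the core $\sigma_n$ up to multiplicity; hence $\length(\sigma_n,U)\le\length(\gamma_n,U)\to 0$ and $\sigma_n$ is non-null-homotopic. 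I would then invoke the Collar Lemma to produce an embedded ring domain $C_n=\{x\in U:\dist_U(x,\sigma_n)<\operatorname{arcsinh}(1/\sinh(\tfrac12\length(\sigma_n,U)))\}$ with core $\sigma_n$ and $\modulus(C_n)$ of order $1/\length(\sigma_n,U)$, hence $\modulus(C_n)\to\infty$. Since $\overline{C_n}$ is compact in $U$ (completeness of the hyperbolic metric) and $\sigma_n$ separates two complementary components of $U$, this ring domain witnesses that $\partial U$ is not uniformly perfect.

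The routine parts are the modulus-versus-hyperbolic-length computation on a round annulus and the Schwarz--Pick monotonicity. The main obstacle is the converse: a curve of small hyperbolic length need be neither simple nor embedded, so turning it into an honest embedded ring domain of large modulus genuinely requires two-dimensional hyperbolic geometry — the thick--thin decomposition together with the quantitative Collar Lemma. An equivalent device is to pass to the cyclic cover $\D/\langle g_n\rangle$ determined by the deck transformation in the free homotopy class of $\gamma_n$, which is a round annulus of modulus a universal constant times $1/\length(\sigma_n,U)$; the role of the thick--thin input is precisely to guarantee that a definite sub-annulus of this cover embeds into $U$.
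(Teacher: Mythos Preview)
The paper does not prove this lemma at all: it simply records it as ``well-known'' and refers to \cite[Theorem~2.3]{Sugawa98} and \cite[Proposition~3]{Zheng01}. Your proposal therefore goes well beyond what the paper supplies, and as an outline it is essentially correct; both directions are standard and your sketch captures the right ideas (Schwarz--Pick monotonicity plus the core-geodesic length formula for the easy direction, Collar Lemma / thick--thin decomposition for the harder one).

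One small gap worth flagging in the converse: you write ``pass to the closed geodesic $\gamma_n^{\,*}$ in the free homotopy class of $\gamma_n$'', but such a geodesic need not exist when the class is parabolic, i.e.\ when $\gamma_n$ is freely homotopic to a loop around a puncture of $U$ (an isolated point of $\partial U$). Your parenthetical remark about cusp neighbourhoods addresses why a short geodesic cannot sit in a cusp, but not the prior question of existence. The fix is immediate: if some $\gamma_n$ is parabolic then $\partial U$ has an isolated point, and a set with an isolated point is never uniformly perfect (the punctured-disk neighbourhood furnishes separating annuli of arbitrarily large modulus). So one may assume all classes are hyperbolic and proceed as you do. Apart from this, and the ``standard reduction'' that the separating annuli in the modulus criterion may be taken inside $U$ (which is indeed routine for planar domains but perhaps deserves a line), the argument is sound.
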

The next lemma is also standard, but for convenience we
include the proof. Here $n(\gamma,a)$ denotes the winding
number of a curve $\gamma$ with respect to a point~$a$.
\begin{lemma}\label{lemma4}
Let $0<r<R$ and let $\gamma$ be a curve in $\ann(0;r,R)$.
Then
$$\length(\gamma,\ann(0;r,R))\geq \frac{2\pi^2 |n(\gamma,0)|}{\log (R/r)}.$$
\end{lemma}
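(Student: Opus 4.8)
The plan is to pass to the universal cover of $\ann(0;r,R)$, which is a Euclidean strip, and there to bound the hyperbolic length of a lift of $\gamma$ from below by a constant multiple of the total variation of $\arg\gamma$, which equals $2\pi|n(\gamma,0)|$. Concretely, set $L=\log(R/r)$ and $\Sigma=\{w\in\C:0<\re w<L\}$, and let $p(w)=re^{w}$, a holomorphic covering map of $\ann(0;r,R)$ by $\Sigma$. Since $\Sigma$ is simply connected and hyperbolic, $p$ is a local isometry for the hyperbolic metrics, so $\length(\gamma,\ann(0;r,R))=\length(\widetilde\gamma,\Sigma)$ for any lift $\widetilde\gamma$ of $\gamma$. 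Rescaling the standard strip $\{0<\re w<\pi\}$, which carries the hyperbolic density $1/\sin(\re w)$ (with the curvature $-1$ normalisation, so that $\varrho_{\D}(z)=2/(1-|z|^2)$), one obtains $\varrho_\Sigma(w)=\pi/(L\sin(\pi\re w/L))\ge\pi/L$ throughout $\Sigma$.

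Writing $\widetilde\gamma=u+iv$ on $[0,1]$, the identity $p\circ\widetilde\gamma=\gamma$ shows that $v$ is a continuous branch of $\arg\gamma$, whence $|v(1)-v(0)|=2\pi|n(\gamma,0)|$ (for a non-closed curve $\gamma$ this is simply the meaning of $n(\gamma,0)$). Therefore, using $\varrho_\Sigma\ge\pi/L$ and $|\widetilde\gamma'|\ge|v'|$,
$$\length(\gamma,\ann(0;r,R))=\int_0^1\varrho_\Sigma(\widetilde\gamma(t))\,|\widetilde\gamma'(t)|\,dt\ge\frac{\pi}{L}\int_0^1|v'(t)|\,dt\ge\frac{\pi}{L}\,|v(1)-v(0)|=\frac{2\pi^2|n(\gamma,0)|}{\log(R/r)}.$$

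There is no genuine obstacle; what needs a little care is only the use of two standard facts without proof — that a holomorphic covering map by a hyperbolic domain is a local isometry, and the explicit hyperbolic density of a strip — together with the bookkeeping of the normalisation. One could equally well avoid lifts and argue in $\ann(0;r,R)$ directly via polar coordinates $\gamma=\rho e^{i\theta}$, using the pointwise bound $\varrho_{\ann(0;r,R)}(z)\ge\pi/(|z|\log(R/r))$ (which comes from the same strip computation) together with $|\gamma'|\ge\rho|\theta'|$. As a sanity check, the inequality is sharp: for the core circle $|z|=\sqrt{rR}$, traversed once, it becomes $2\pi^2/\log(R/r)=\pi/\modulus(\ann(0;r,R))$, the well-known hyperbolic length of the core geodesic of the annulus.
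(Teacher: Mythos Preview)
Your proof is correct and essentially the same as the paper's. The only cosmetic difference is that you lift to the universal covering strip $\Sigma$ and use the pointwise bound $\varrho_\Sigma\ge\pi/L$ there, while the paper works directly in the annulus with the pulled-back density $\varrho_{\ann(0;1,R)}(z)=\pi/(|z|\sin(\pi\log|z|/\log R)\log R)\ge\pi/(|z|\log R)$ and then reduces $\int_\gamma|dz|/|z|$ to $\int_\gamma|d\arg z|\ge 2\pi|n(\gamma,0)|$; these are the same estimate, and indeed your final paragraph sketches exactly the paper's argument.
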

\begin{proof}
We may assume that $r=1$.
The density of the hyperbolic metric
in $\ann(0;1,R)$ is given by
(see, e.g.,~\cite[p.~12]{McM94})
\[
\varrho_{\ann(0;1,R)}(z)=
\frac{\pi}{|z| \sin(\pi\log |z|/\log R) \log R}.
\]
In particular, we have
\[
\varrho_{\ann(0;1,R)}(z)\geq
\frac{\pi}{|z| \log R}
\]
and thus
$$
\begin{aligned}
\length(\gamma,\ann(0;1,R))
&=\int_\gamma\varrho_{\ann(0;1,R)}(z)|dz|
\geq  \frac{\pi}{\log R}\int_\gamma \frac{ |dz|}{|z|} \\
&\geq \frac{\pi}{\log R}\int_\gamma  |d \arg z|
\geq \frac{\pi}{\log R}\left|\int_\gamma  d \arg z\right|
=\frac{2\pi^2}{\log R}|n(\gamma,0)| .
\end{aligned}
$$
\end{proof}
The following result can be  found in~\cite[Theorem~3]{Zheng00}.
\begin{lemma}\label{lemma5}
Let $U,V$ be domains in $\C$ and let $f:U\to V$ be a proper
holomorphic map. Then $\partial U$ is uniformly perfect if and only
if $\partial V$ is uniformly perfect.
\end{lemma}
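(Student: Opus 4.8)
The plan is to derive both implications from Lemma~\ref{lemma3} (the length characterisation of uniform perfectness), together with the Schwarz--Pick lemma and the Riemann--Hurwitz formula (Lemma~\ref{lemma1}). A proper holomorphic map $f\colon U\to V$ has a well-defined finite degree $d$ and, being nonconstant, only finitely many critical points in $U$; hence its set $E$ of critical values is a finite subset of $V$, and $f$ restricts to an honest covering of degree $d$ over $V\setminus E$. I may assume $U$ and $V$ are hyperbolic: if one of them equals $\C$ or $\C$ minus a point, then so does the other (via Lemma~\ref{lemma1}) and the assertion is checked directly. I will also use that $\pi_1$ of a planar domain is free, hence torsion-free, so $\beta\neq1$ forces $\beta^m\neq1$.

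First I would prove ``$\partial U$ uniformly perfect $\Rightarrow\partial V$ uniformly perfect'' using the modulus characterisation recalled in the introduction. Let $A$ be an annulus contained in $V$ and separating two complementary components of $V$. Since $E$ meets $A$ in at most $\#E$ points, a conformal model of $A$ shows that $A$ contains a sub-annulus $A'$ with $A'\cap E=\emptyset$, whose core is freely homotopic in $A$ to the core of $A$ (so $A'$ still separates two complementary components of $V$), and with $\modulus(A')\geq\modulus(A)/(\#E+1)$. Then $f\colon f^{-1}(A')\to A'$ is an unbranched covering of degree $d$, so each component $B$ of $f^{-1}(A')$ is an annulus mapped onto $A'$ by a covering of some degree $e\leq d$; thus $\modulus(B)=\modulus(A')/e\geq\modulus(A)/((\#E+1)d)$. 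The core of $B$ maps to the $e$-th power of the (essential) core of $A'$, hence is not null-homotopic in $U$, i.e.\ $B$ separates two complementary components of $U$. Uniform perfectness of $\partial U$ now bounds $\modulus(B)$, and therefore $\modulus(A)$, by a constant independent of $A$.

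For the converse I would use Lemma~\ref{lemma3}: by hypothesis there is $\delta_V>0$ with $\length(\sigma,V)\geq\delta_V$ for every non-null-homotopic closed curve $\sigma$ in $V$; in particular $V$ has no cusps and every point of $V$ is the centre of an embedded hyperbolic disc of radius $\delta_V/2$. Let $\gamma$ be a non-null-homotopic closed curve in $U$. If $f\circ\gamma$ is not null-homotopic in $V$, then Schwarz--Pick gives $\length(\gamma,U)\geq\length(f\circ\gamma,V)\geq\delta_V$. The essential case is that $f\circ\gamma$ is null-homotopic in $V$ while $\gamma$ is not null-homotopic in $U$; this genuinely occurs, for instance for the core of an annulus mapped properly two-to-one onto a disc. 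Here I would argue by contradiction: replacing each such $\gamma$ by the closed geodesic in its free homotopy class, suppose there are non-null-homotopic geodesics $\gamma_n\subset U$ with $f\circ\gamma_n$ null-homotopic in $V$ and $\length(\gamma_n,U)=:\ell_n\to0$. By Schwarz--Pick $f(\gamma_n)$ has hyperbolic length $\leq\ell_n$, so for $\ell_n<\delta_V$ it lies in an embedded hyperbolic disc $D_n\subset V$ of radius $\ell_n$. Let $W_n$ be the component of $f^{-1}(D_n)$ containing $\gamma_n$; then $f\colon W_n\to D_n$ is proper, and since $\gamma_n$ is essential in $U$ it is essential in $W_n$, so $W_n$ is not simply connected. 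As $D_n$ is simply connected, Lemma~\ref{lemma1} forces $W_n$ to contain a critical point of $f$, whose critical value lies in $D_n$; hence $D_n$ meets the finite set $E$, and after passing to a subsequence $f(\gamma_n)\to p$ for some $p\in E$. Finally, a properness argument shows that $f^{-1}$ of a sufficiently small neighbourhood of $p$ decomposes into the ``local sheets'' around the points of $f^{-1}(p)$; for large $n$ the connected set $\gamma_n$ lies in one such sheet, necessarily one around a critical point $z^\ast$ over $p$, since a sheet around a regular preimage is simply connected. But in the local normal form $f\sim(\zeta\mapsto\zeta^{\ell})$ near $z^\ast$, the preimage inside that sheet of a small topological disc around $p$ is again a topological disc; so $W_n$ is simply connected, contradicting that $\gamma_n$ is essential. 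Hence $\inf\length(\gamma,U)$ over all non-null-homotopic $\gamma$ is positive, and $\partial U$ is uniformly perfect by Lemma~\ref{lemma3}.

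The main obstacle is exactly this last case: the homomorphism $f_\ast\colon\pi_1(U)\to\pi_1(V)$ need not be injective, so essential loops of $U$ may become trivial in $V$ and one cannot push their lengths forward. The mechanism that saves the day --- and the reason the hypothesis on $\partial V$ is indispensable (compare $f\colon U\to\D\setminus\{0\}$, where $\partial V$ is not uniformly perfect) --- is that such a loop, if very short, would have to be concentrated near a critical value, yet the local preimage sheets over the finitely many critical values are simply connected and so cannot carry an essential curve. The two points needing care are the properness argument that $\gamma_n$ lies in a single fixed local sheet for large $n$, and the normal-form computation that this sheet is simply connected; everything else is routine bookkeeping with Lemma~\ref{lemma1}, Lemma~\ref{lemma3} and the Schwarz--Pick lemma.
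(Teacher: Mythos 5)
The paper does not prove Lemma~\ref{lemma5}; it simply cites it as \cite[Theorem~3]{Zheng00}. So there is no in-paper argument to compare against, and your proposal has to be judged on its own merits as a self-contained proof.

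Your overall strategy (modulus bookkeeping via Lemma~\ref{lemma1} for one direction, Lemma~\ref{lemma3} with Schwarz--Pick for the other, and an analysis of what happens near critical values to handle the failure of injectivity of $f_\ast$ on $\pi_1$) is reasonable, and most of the individual steps are sound. The localisation argument in the second direction --- short essential loops with null-homotopic image must travel into a simply connected local sheet over a critical value, which is impossible --- is the right idea, and the Riemann--Hurwitz computations confirming that the relevant sheets and preimages of small simply connected neighbourhoods are simply connected are correct.

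There is, however, one genuine gap, and it sits at the foundation of \emph{both} directions: you assert that a proper holomorphic map ``being nonconstant, [has] only finitely many critical points in $U$,'' and hence that the critical value set $E$ is finite. Being nonconstant only gives that $\crit(f)$ is a discrete closed subset of $U$, and properness then gives that $E$ is a discrete closed subset of $V$; neither of these is finiteness. If $V$ (equivalently $U$, by Lemma~\ref{lemma1}) has finite connectivity, then finiteness follows from the Riemann--Hurwitz formula, because $m=\conn(U)-2-d(\conn(V)-2)$ is then a finite integer. But the case Lemma~\ref{lemma5} is actually needed for in this paper is precisely when $U$ and $V$ are infinitely connected wandering domains, and there Riemann--Hurwitz only tells you that $m$ and $\conn(U)$ are simultaneously finite or infinite --- it does not rule out $m=\infty$. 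Your first direction needs a \emph{uniform} bound $\#(E\cap A)<\infty$ to extract the subannulus $A'$ with $\modulus(A')\geq\modulus(A)/(\#E+1)$, and your second direction needs $E$ finite to pass to a subsequence with $D_n\cap E=\{p\}$ for a \emph{fixed} $p$ and then to fix a single sheet over $p$; if instead the critical values produced by the Riemann--Hurwitz step escape to $\partial V$, that whole chain of reductions breaks down. So the finiteness of $E$ is not a throwaway remark: it is load-bearing, the justification you give for it is incorrect, and the statement itself is nontrivial (and not obviously true) for proper maps between infinitely connected plane domains. Either prove that $E$ is finite in the generality of the lemma, or restructure the argument so that it only uses the discreteness of $E$ in $V$ (for instance, in the first direction by shrinking $A$ to a compactly contained essential subannulus before counting $E\cap A$, and in the second direction by deriving a contradiction from $f(\gamma_n)\to\partial V$ via properness rather than insisting on a fixed critical value $p$).

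A couple of smaller remarks: the reduction to closed geodesics in the second direction is both unnecessary and slightly delicate (free homotopy classes corresponding to cusps of $U$ have no geodesic representative), so it is cleaner to just take essential loops with lengths tending to $0$. Also, when you choose the subannulus $A'$ you should make sure it is still compactly contained in $V$ so that $f^{-1}(\overline{A'})$ is compact; this is easy to arrange but should be said.
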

Lemma~\ref{lemma5}
implies that if an entire function $f$ has a multiply connected
wandering domain $U_0$ and if $U_k=f^k(U)$ is as before, then
$\partial U_0$ is uniformly perfect if
and only if $\partial U_k$ is uniformly perfect.

\section{Proof of Theorems~\ref{thm1}}
It follows from Theorem~A, part (ii), that
\begin{equation}\label{1d}
0\notin U_k\quad\text{ and }\quad 0\in \widetilde{U_k}
\end{equation}
for large~$k$. In view of Lemma~\ref{lemma5} and the remark
following it, we may assume that~\eqref{1d} holds for all $k\geq 0$.

Let now $k$ be an index such that $U_k$ contains a critical point
$c_k$ and let $A_{k+1}$ be an annulus as given in the hypothesis.
Choosing $A_{k+1}$ slightly smaller if necessary, we may assume that
$f(c)\notin \overline{A_{k+1}}$ for every critical point $c\in U_k$
and that $\overline{A_{k+1}}\subset U_{k+1}$. Thus there exists an
annulus $B_{k+1}\subset U_{k+1}$ separating $A_{k+1}$ and
$C(\infty,U_{k+1})$ such that  $f(c_k)\in  B_{k+1}$. We may also
assume that $A_{k+1}$ and $B_{k+1}$ are bounded by smooth curves.

Let $V_k=f^{-1}(B_{k+1})\cap U_k$. By the Riemann-Hurwitz
formula, $V_k$ is at least triply connected.
Thus there exists a component $X_k$ of $\CC\setminus V_k$
satisfying $X_k\neq C(0,V_k)$ and $X_k\neq C(\infty,V_k)$.
Thus $0\notin X_k$ and $X_k\subset \widetilde{U_k}$.
Moreover, $X_k\cap U_k$ contains a component $Y_k$ of $f^{-1}(A_{k+1})$.
Since $f(c)\notin A_{k+1}$ for every critical point $c\in U_k$,
we find that $Y_k$ is also an annulus and $f:Y_k\to A_{k+1}$ is a covering.
Moreover, $f:\widetilde{Y_k}\to\widetilde{A_{k+1}}$ is a proper
map and its degree $d_k$ equals the number of zeros of
$f$ in $\widetilde{Y_k}$.
Since $\widetilde{Y_k}\subset X_k\subset \widetilde{U_k}
\setminus C(0,U_k)$ we have
 $d_k\leq n_k-m_k$.  On the other hand, $d_k$ is equal to the
degree of the covering $f:Y_k\to A_{k+1}$. We thus have
$$
\modulus(Y_k)=\frac{\modulus(A_{k+1})}{d_k} \geq
\frac{\modulus(A_{k+1})}{n_k-m_k}.
$$
 Hence $U_k$
contains an annulus $Y_k$ with $0\notin \widetilde{Y_k}$ and
$\modulus(Y_k)\to\infty$ as $k\to\infty$.

 Let
now $p_{k-1}$ be the number of critical points $c$ of $f$ in
$\widetilde{U_{k-1}}$ for which $f(c)\in \widetilde{Y_k}$. Since
$\widetilde{Y_k}\subset X_k$ and $X_k\neq C(0,U_k)$ we have
$p_{k-1}\leq l_{k-1}$. Without loss of generality we may assume
that~\eqref{1e} holds for all $k$ and thus $p_{k-1}<m_{k-1}$. Since
$n_{k-1}$ is the degree of the proper map $f:\widetilde{U_{k-1}} \to
\widetilde{U_k}$ we have $n_{k-1}\geq m_{k-1}+l_{k-1}> 2p_{k-1}$. We
deduce from Lemma~\ref{lemma2} that there exists an annulus
$Y_k^1\subset U_{k-1}$ which is mapped univalently onto $Y_k$
by~$f$. Applying Lemma~\ref{lemma2} again we find an annulus
$Y_k^2\subset U_{k-2}$ which is mapped univalently onto $Y_k$
by~$f^2$. Inductively we thus obtain a non-trivial
 annulus $Y_k^k\subset U_0$ which
is mapped univalently onto $Y_k$ by~$f^k$. Since
$\modulus(Y_k^k)=\modulus(Y_k)\to\infty$ as $k\to\infty$ by our
hypothesis~\eqref{1f}, we conclude that $\partial U_0$ is not
uniformly perfect.

\section{Proof of Theorem~\ref{thm4}}
Since $r_{k+1}\geq 2 r_k$ for large $k$ we easily see that the
product defining $f$ converges. In fact, we have not only
$r_{k+1}\geq 2 r_k$,  but we can deduce from~\eqref{thm4a} that
\begin{equation}\label{p4a}
\frac{r_{k+1}}{r_k} =
\frac{P_k}{P_{k-1}}\left(\frac{r_k}{r_{k-1}}\right)^k\geq
\frac{P_k}{P_{k-1}}3^{k-k_0}\geq 2^k
\end{equation}
for some $k_0$ and large $k\geq k_0$.
In particular,
$$
\lim_{k\to\infty}\frac{r_{k+1}}{r_k}=\infty.
$$
This implies that if $w\in\C\setminus\{0\}$, then
\begin{equation}\label{p4c}
\left|\prod_{j=1}^{k-1}\left(1-\frac{w a_k}{a_j}\right)\right|
\sim \left|w^{k-1}
\prod_{j=1}^{k-1}\frac{a_k}{a_j}\right|
=
|w|^{k-1}
\prod_{j=1}^{k-1}\frac{r_k}{r_j}
\sim |w|^{k-1}\prod_{j=1}^{k-1}
\left(1+\frac{r_k}{r_j}\right)
\end{equation}
and
\begin{equation}\label{p4d}
\left|\prod_{j=k+1}^{\infty}\left(1-\frac{w a_k}{a_j}\right)\right|
 \to 1
\end{equation}
as $k\to\infty$.  Given $\beta>\alpha>0$, we actually find
that~\eqref{p4c} and~\eqref{p4d} hold uniformly for $\alpha\leq |w|
\leq \beta$. It follows that
\begin{equation}\label{p4d1}
\begin{aligned}
|f(wa_k)|
&\sim
\left|C\left(wa_k\right)^N\right|
|w|^{k-1}\prod_{j=1}^{k-1}
\left(1+\frac{r_k}{r_j}\right) |w-1| \\
&=|C| |w|^{k-1+N}r_k^N \prod_{j=1}^{k-1}
\left(1+\frac{r_k}{r_j}\right)|w-1| \\
&=|C| \frac{|w|^{k-1+N}|w-1|}{2P_k}r_{k+1}
\end{aligned}
\end{equation}
as $k\to\infty$, uniformly for $\alpha\leq |w| \leq \beta$.

Fix $\eps\in(0,1/2]$. For $|z|=(1+\eps)r_k=(1+\eps)|a_k|$ we deduce
from~\eqref{p4d1} that
$$
|f(z)| \geq (1-o(1)) \frac{|C| (1+\varepsilon)^{k-1+N} \eps}{2P_k}
r_{k+1}
$$
and thus
\begin{equation}\label{p4e}
|f(z)|\geq 2 r_{k+1}
\quad\text{for}\quad
|z|=(1+\eps)r_k
\end{equation}
if $k$ is sufficiently large.
Similarly we find
that
\begin{equation}\label{p4f}
|f(z)|\leq \frac12 r_{k+1}
\quad\text{for}\quad
|z|=(1-\eps)r_k
\end{equation}
for large~$k$.
 Moreover, the above reasoning and~\eqref{p4a}
 show that if $|z|=(1-\eps)r_k$, then
$$
|f(z)| \geq (1-o(1)) \frac{|C| (1-\varepsilon)^{k-1+N} \eps}{2P_k}
r_{k+1}\geq (1-o(1)) \frac{|C| (1-\varepsilon)^{k-1+N} \eps
2^k}{2P_k} r_k.
$$
Thus
\begin{equation}\label{p4g}
|f(z)|\geq 2 r_{k}
\quad\text{for}\quad
|z|=(1-\eps)r_k
\end{equation}
for large~$k$.

Finally, replacing $k$ by $k+1$ in~\eqref{p4f} yields that
 $|f(z)|\leq r_{k+2}/2$ for $|z|=(1-\eps)r_{k+1}$.
 Noting that  $(1-\eps)r_{k+1}\geq (1+\eps)r_k$ we deduce from the maximum principle
  that this inequality also
holds for $|z|=(1+\eps)r_k$. Hence
\begin{equation}\label{p4h}
|f(z)|\leq \frac12 r_{k+2}
\quad\text{for}\quad
|z|=(1+\eps)r_k.
\end{equation}
Now~\eqref{thm4d} follows from~\eqref{p4e}--\eqref{p4h} and
the  maximum and minimum principle.

Next we show that for large $k$  the function $f$ has
exactly one critical point
in the closed annulus
$$B_k=\overline{\ann\left(0;\sqrt{r_kr_{k-1}},\sqrt{r_{k+1}r_k}\right)}$$
and that if we denote this critical point by $c_k$, then
\begin{equation}\label{p4i}
c_k=\left(1-\frac{1}{k+N+\delta_k}\right)a_k
\end{equation}
for some sequence $(\delta_k)$ tending to~$0$. In order to do this
we note that if $z\in B_k$, then
 \begin{equation}\label{rouch}
\begin{aligned}
&\quad \ \left|\frac{f'(z)}{f(z)}-\frac{k-1+N}{z}-\frac{1}{z-a_k}\right|
=
\left|\sum_{j=1}^{k-1}\left(\frac{1}{z-a_j}-\frac{1}{z}\right)
+ \sum_{j=k+1}^{\infty}\frac{1}{z-a_j}\right| \\
&\leq
\sum_{j=1}^{k-1} \frac{r_j}{|z|(|z|-r_j)}
+ \sum_{j=k+1}^{\infty}\frac{1}{r_j-|z|}
\leq
\sum_{j=1}^{k-1} \frac{2r_j}{|z|^2}
+ \sum_{j=k+1}^{\infty}\frac{2}{r_j}
\leq
\frac{4r_{k-1}}{|z|^2}+\frac{4}{r_{k+1}} \\
&\leq
\left(\sqrt{\frac{r_{k-1}}{r_k}}+\sqrt{\frac{r_{k}}{r_{k+1}}}\right)
\frac{1}{|z|}
=o\left(\frac{1}{|z|}\right)
\end{aligned}
\end{equation}
as $k\to\infty$.

Using Rouch\'e's theorem we deduce from~\eqref{rouch} that the
difference between the number of zeros and poles in $B_k$ is the
same for $f'/f$ and the function given by
$$
z\mapsto \frac{k-1+N}{z}+\frac{1}{z-a_k},
$$
 provided $k$ is sufficiently large. We
conclude that if $k$ is large, then $f'$ has exactly one zero $c_k
\in B_k$. Moreover, \eqref{rouch} implies that
$$
\frac{k-1+N}{c_k}+\frac{1}{c_k-a_k}=o\left(\frac{1}{|c_k|}\right)
$$
as $k\to\infty$.
This yields~\eqref{p4i}.

It follows from~\eqref{p4d1} and~\eqref{p4i}
that
\begin{equation}\label{p4j}
|f(c_k)|
\sim
|C|
\frac{
\left|1-\frac{1}{k+N+\delta_k}\right|^{k-1+N}
\left|\frac{1}{k+N+\delta_k}\right|}{2P_k} r_{k+1}
\sim \frac{|C|}{2ekP_k}r_{k+1}
\end{equation}
as $k\to\infty$.

Suppose now that $\limsup_{k\to\infty} kP_k>|C|/(2e)$.
Then there exists $\eps>0$ such that
$$|f(c_k)|<(1-\eps)r_{k+1}$$
 for infinitely many~$k$.
On the other hand, we have
$$|f(c_k)|>(1-\eps)\frac{|C|}{2ekP_k}r_{k+1}
\geq (1-\eps)\frac{|C|3^{k-k_0}}{2ekP_{k-1}}r_k\geq k r_k$$ for
large $k$ by~\eqref{thm4a} and~\eqref{p4a}. Thus $f(c_k)\in U_k$ for
infinitely many $k$ and Lemma~\ref{lemma1a} implies that all $U_k$
are infinitely connected.

Moreover, noting that  $n_{k-1}-m_{k-1}\leq 2$ we see that~\eqref{1f},
with $k$ replaced by $k-1$, holds for
$$A_k=
\ann\left(0;(1+\eps_k)r_k,k r_k\right) \subset U_k.$$
Clearly, \eqref{1g} and hence~\eqref{1e} are also satisfied.
Theorem~\ref{thm1}
yields that $\partial U_k$ is not uniformly perfect for $k\geq
K$.

Similarly we deduce from~\eqref{p4a} and~\eqref{p4j}
 that if $\liminf_{k\to\infty}
kP_k<|C|/(2e)$, then
\begin{equation} \label{p4x}
(1+\eps)r_{k+1}<|f(c_k)|<2^k r_{k+1} \leq \frac12 r_{k+2}
\end{equation}
and hence $f(c_k)\in U_{k+1}$ for infinitely many~$k$. Again we
deduce from Lemma~\ref{lemma1a} that all $U_k$ are infinitely
connected. Moreover, if $\liminf_{k\to\infty} kP_k=0$, then
$|f(c_k)|/r_{k+1}\to\infty$ and with
$$A_{k+1}=\ann\left(0;(1+\eps_{k+1})r_{k+1},|f(c_k)|
\right)$$
 we deduce from Theorem~\ref{thm1} that $\partial
U_k$ is not uniformly perfect for $k\geq K$.

Suppose now that~\eqref{thm4g} holds. We show first that if $k$ is
large, then
\begin{equation}\label{p4k}
\left\{z\in\C: \left(1-\frac{1}{k}\right)r_k \leq
|z|<(1-\eps_k)r_{k+1}, \ |z-a_k|\geq \frac{1}{k}r_k\right\}
 \subset
U_k,
\end{equation}
but
\begin{equation}\label{p4l}
U_k\cap \overline{D\left( a_k,\frac{\delta}{k}r_k\right)}=\emptyset
\end{equation}
and
\begin{equation}\label{p4m}
U_k\cap \left[ \frac{1}{2} a_k,
\left(1-\frac{\tau}{k}\right) a_k\right]=\emptyset
\end{equation}
for certain $\tau,\delta>0$. (Here
 $[u,v]=\{u+t(v-u):0\leq t\leq 1\}$ is
 the line segment connecting two points $u,v\in\C$.)

It follows from~\eqref{p4f} and~\eqref{p4g} that in order to
prove~\eqref{p4k} it suffices to show that there exists $\eps>0$
such that
\begin{equation}\label{p4n}
|f(z)|\geq (1+\eps) r_{k+1} \quad\text{for}\quad |z|=
 \left(1-\frac{1}{k}\right)r_k
\end{equation}
and
\begin{equation}\label{p4o}
|f(z)|\geq (1+\eps) r_{k+1} \quad\text{for}\quad |z-a_k|=
\frac{1}{k}r_k
\end{equation}
for large~$k$.
Now~\eqref{p4d1} yields
$$|f(z)|\geq
(1-o(1))|C|\frac{\left(1-\frac{1}{k}\right)^{k-1+N}}{2kP_k}r_{k+1}
=(1-o(1))\frac{|C|}{2ekP_k}r_{k+1}
$$
for $|z|=(1-1/k)r_k$. Since $\limsup_{k\to\infty} kP_k<|C|/(2e)$ by
our assumption,~\eqref{p4n} follows. Essentially the same argument
also yields~\eqref{p4o} and thus we obtain~\eqref{p4k}.

On the other hand, if $|z-a_k|=\delta r_k/k$, then $|z|\leq
(1+\delta/k)r_k$ and we find by similar estimates as before that
$$|f(z)|\leq
(1+o(1))\frac{|C|\left(1+\frac{\delta}{k}\right)^{k-1+N}\delta}{2kP_k}r_{k+1}
=(1+o(1))\frac{|C|e^\delta \delta}{2kP_k}r_{k+1}.
$$
Since we assumed that $\liminf_{k\to\infty} kP_k>0$ we see that if
$\delta$ is sufficiently small, then
\begin{equation}\label{8c}
|f(z)|\leq
\frac12 r_{k+1}
\end{equation}
for large $k$. Thus $f(z)\in \widetilde{U_k}$. Since
$\widetilde{U_k}\subset C(0,U_{k+1})$ and thus $\widetilde{U_k}\cap
U_{k+1}=\widetilde{U_k}\cap f(U_{k})=\emptyset$, this
yields~\eqref{p4l}.

Also, if $z=s a_k$ where $1/2 \leq s\leq (1-\tau/k)$, then
$$|f(z)|\sim
\frac{|C|s^{k-1+N}(1-s)}{2P_k}
$$
by~\eqref{p4d1}. For $\tau\geq 2$ the function $s\mapsto
s^{k-1+N}(1-s)$ is increasing in $[1/2,(1-\tau/k)]$ and thus we
obtain
$$|f(z)|\leq
(1+o(1))\frac{|C|\left(1-\frac{\tau}{k}\right)^{k-1+N}\tau}{2kP_k}r_{k+1} \sim
\frac{|C|e^{-\tau} \tau}{2kP_k}r_{k+1}.
$$
If $\tau$ is chosen sufficiently large, our assumption that
$\liminf_{k\to\infty} kP_k>0$ implies~\eqref{8c} for large $k$. The
same argument as above now yields~\eqref{p4m}.

We also note that by construction we have
\begin{equation}\label{p4p}
U_k\subset \ann(0;(1-\eps_{k-1})r_{k},(1+\eps_{k+1})r_{k+1}) \subset
\ann(0;1,2r_{k+1})
\end{equation}
for large $k$.
We may assume that~\eqref{p4k}, \eqref{p4l}, \eqref{p4m}
and~\eqref{p4p}
hold for $k\geq K$.

In order to prove that $\partial U_K$ is uniformly perfect we use
Lemma~\ref{lemma3}. Let $\sigma_K$ be a curve in $U_K$ which is not
null-homotopic. If $n(\sigma_K,0)\neq 0$, then
\begin{equation}\label{p4q}
\length(\sigma_K,U_K)\geq \length(\sigma_K,\ann(0;1,2r_{K+1})) \geq
\frac{2\pi^2 }{\log (2r_{K+1})}
\end{equation}
by~\eqref{p4p} and Lemma~\ref{lemma4}. Suppose that
$n(\sigma_K,0)=0$ and put $\sigma_k=f^{k-K}(\sigma_K)$ for $k> K$.
By Theorem~A, part (ii), we have $n(\sigma_k,0)\neq 0$ for large~$k$.
Thus there exists $k\geq K$ such that $n(\sigma_k,0)=0$ and
$n(\sigma_{k+1},0)\neq 0$. It follows that $n(\sigma_k,a)\neq 0$ for
some zero $a$ of~$f$, and~\eqref{p4k} implies that we actually have
$n(\sigma_k,a_k)\neq 0$.  Let
$$V_k= \C\setminus \left( \overline{D\left( a_k,\frac{\delta}{k}r_k\right)} \cup
\left[ \frac12 a_k, \left(1-\frac{\tau}{k}\right) a_k\right]\right).
$$
Since $U_k\subset V_k$ by~\eqref{p4l} and~\eqref{p4m}, we have
\begin{equation}\label{p4s}
\length(\sigma_K,U_K)\geq \length(\sigma_k,U_k)\geq
\length(\sigma_k,V_k).
\end{equation}
We put
$$
T_k(z)=k\frac{z-a_k}{a_k} \quad\text{and}\quad W_k=T_k(V_k)
=\C\setminus \left( \overline{D(0,\delta)}\cup
\left[-\frac12 k,-\tau\right]\right).
$$
We may assume that $K\geq 4\tau$.
With
$$
W= \C\setminus \left( \overline{D(0,\delta)}\cup
[-2\tau,-\tau]\right)
$$
we then have $W_k\subset W$ for $k\geq K$. Since $a_k/2\in C(0,U_k)$ we
$n(\sigma_k,a_k/2)=0$. On the other hand, $n(\sigma_k,a_k)\neq 0$
and thus  $\sigma_k$ separates $[a_k/2,(1-\tau/k)a_k]$ and
$\overline{D(a_k,\delta r_k/k)}$. Hence $T_k(\sigma_k)$ is a curve
in $W$ which separates $\overline{D(0,\delta)}$ and
$[-2\tau,-\tau]$. Thus
\begin{equation}\label{p4t}
\length(\sigma_k,V_k)= \length(T_k(\sigma_k),W_k)\geq
\length(T_k(\sigma_k),W) \geq c
\end{equation}
for some positive constant $c$. Combining~\eqref{p4s}
and~\eqref{p4t} we obtain
\begin{equation}\label{p4u}
\length(\sigma_K,U_K)\geq c.
\end{equation}
Now~\eqref{p4q} and~\eqref{p4u} imply together with
Lemma~\ref{lemma4} that $\partial U_K$ is uniformly perfect.

\begin{remark}\label{remks}
As mentioned in the introduction,
Kisaka and Shishikura~\cite{KS} constructed an example of
an entire function function $f$ with a doubly connected
wandering domain.
In order to ensure that the wandering domains
do not contain critical points, which has to be avoided
by Lemma~\ref{lemma1a}, they construct it in such a way
that $f(0)=0$ and $f^2(c)=0$ for each critical point
$c$ of~$f$.
Their construction uses quasiconformal surgery,
but it turns out that the function obtained
is of order zero and can be written in
the form~\eqref{thm4c} with a sequence $(a_k)$ which
tends to $\infty$ rapidly.
The sequence $(c_k)$ of critical points
again satisfies~\eqref{p4i} and
the construction is such that
 $f(c_k)=a_{k+1}$ and thus  $f^2(c_k)=0$.
It follows from the arguments in the above proof
that
$$
a_{k+1}=f(c_k)\sim a_k^N \frac{C}{ek}\prod_{j=0}^{k-1}\frac{a_k}{a_j}
$$
as $k\to\infty$.
\end{remark}

\section{Proof of Theorem~\ref{thm2}}\label{proofthm2}
It follows easily from Rolle's theorem that
for a real polynomial with real zeros, each open interval
on the real axis bounded by two adjacent zeros contains
exactly one critical point, this critical point is simple, and
there are no further critical points except for multiple zeros.
Since $f$ is a locally uniform limit of polynomials with
real zeros, the above result also holds for~$f$.
We thus find that for $k\geq 0$ there
exists a critical point $c_k\in (a_{k},a_{k+1})$ and
except $0$ and one further critical point in the interval $(0,a_0)$
there are no critical points other than the $a_k$
and~$c_k$.

We will show that if $q_0$ is sufficiently large, then
the following properties are satisfied for all $k\geq 0$:
\begin{equation}\label{3a}
f\left(\ann\left(4a_k,a_{k+1}/4\right)\right)
\subset \ann\left(4a_{k+1},a_{k+2}/4\right),
\end{equation}
\begin{equation}\label{3b}
f\left(\ann\left(4a_k,\sqrt{a_{k+1}}\right)\right)
\subset \ann\left(4a_{k+1},\sqrt{a_{k+2}}\right),
\end{equation}
\begin{equation}\label{3c}
\frac{q_{k}}{2q_{k+1}}a_{k+1}<c_k<
\frac{2q_{k}}{q_{k+1}}a_{k+1},
\end{equation}
and
\begin{equation}\label{3d}
\sqrt{a_{k+2}}<f(c_k)<\frac14 a_{k+2}.
\end{equation}
Suppose that \eqref{3a}--\eqref{3d} hold.
Then $f$ has a wandering $U_0$ containing
$\ann\left(4a_0,a_{1}/4\right)$.

We consider the annulus $X_k=D\left(0,\sqrt{a_{k+1}}\right)
\setminus C(0,U_k)$. If $k\geq 1$, then $X_k$ contains no critical
values by~\eqref{3d} and thus the components of $f^{-1}(X_k)$  are
also annuli. In particular, this holds for the component
 $Y_{k-1}$ of $f^{-1}(X_k)$
whose boundary intersects $C(0,U_{k-1})$. By~\eqref{3b} we have
$Y_{k-1}\supset X_{k-1}$ and thus $f(X_{k-1})\subset
f(Y_{k-1})=X_k$. It follows that $X_k\subset F(f)$ and thus
$X_k\subset U_k$ for all $k\geq 0$. This implies that
\begin{equation}\label{3d1}
a_j\in C(0,U_k) \quad\text{for } 0\leq j\leq k.
\end{equation}
We also note that
\begin{equation}\label{3d2}
U_k\subset \ann\left(a_k/4,4a_{k+1}\right)
\subset \ann\left(1,4a_{k+1}\right)
\end{equation}
for $k\geq 1$ and
\begin{equation}\label{3d3}
U_0\subset \ann\left(\delta,4a_{1}\right)
\end{equation}
for some $\delta>0$, since $0$ is a superattracting fixed point.

Similarly as in the proof of Theorem~\ref{thm4} we use
Lemma~\ref{lemma3} to show that $\partial U_0$ is uniformly perfect.
So let $\sigma_0$ be a Jordan curve in $U_0$ which is not
null-homotopic. If $n(\sigma_0,0)\neq 0$, then
\begin{equation}\label{3d4}
\length(\sigma_0,U_0)\geq \frac{2\pi^2}{\log (4a_1/\delta) }
\end{equation}
by~\eqref{3d3} and Lemma~\ref{lemma4}.

We now assume that $n(\sigma_0,0)=0$. Put $\sigma_k=f^k(\sigma_0)$.
By Theorem~A, part (ii), we have $n(\sigma_k,0)\neq 0$ for large $k$ and
thus there exists $k\geq 1$ such that $n(\sigma_k,0)\neq 0$ while
$n(\sigma_{k-1},0)= 0$. It follows that $n(\sigma_{k-1},a)\neq 0$
for some zero $a$ of~$f$. Using~\eqref{3d1} we see that
$n(\sigma_{k-1},a_k)\neq 0$ while $n(\sigma_{k-1},a_j)=0$ for $j\neq
k$. Since $a_k$ is a zero of multiplicity $q_k$ this implies that
$$n(\sigma_k,0)=n(f(\sigma_{k-1}),0)=q_k n(\sigma_{k-1},a_k).$$
In particular, $|n(\sigma_k,0)|\geq q_k$.
Combining this with~\eqref{3d2} and Lemma~\ref{lemma4} we obtain
$$
\length(\sigma_k,U_k)
\geq
\length(\sigma_k,\ann(1,4a_{k+1}))
\geq \frac{2\pi^2 q_k }{\log (4a_{k+1}) }
=\frac{2\pi^2 q_k }{q_k+\log 4}
\geq \pi^2.
$$
On the other hand, $\length(\sigma_k,U_k)\leq \length(\sigma_0,U_0)$
and thus we obtain
\begin{equation}\label{3d5}
\length(\sigma_0,U_0) \geq \pi^2
\end{equation}
in this case.
Thus for each Jordan curve $\sigma_0$ in $U_0$ which
is not null-homotopic we have~\eqref{3d4} or~\eqref{3d5}.
Lemma~\ref{lemma3} now implies that $\partial U_0$ is
uniformly perfect.

Next we note that $l_k=1$ since
$f(c_j)\in U_{j+1}\subset C(0,U_{k+1})$ for $j<k$ so that
$c_k$ is the only critical point of
$f$ in $\widetilde{U_k}$ which is not
mapped into $C(0,U_{k+1})$.
We also have
$$
n_k=2+\sum_{j=0}^{k+1}q_j
\quad\text{and}\quad m_k=2+\sum_{j=0}^k q_j
$$
by~\eqref{3d1}. Thus~\eqref{1e} is satisfied.

In order to prove~\eqref{1h} we note that by~\eqref{3d} the annulus
$$A_{k+1}=\ann\left(4a_{k+1},\sqrt{a_{k+2}}\right)$$
separates $f(c_k)$ and $C(0,U_{k+1})$.
Now
$$
\frac{\modulus(A_{k+1})}{n_k-m_k}=\frac{\log (\sqrt{a_{k+2}}/4a_{k+1})}{2\pi q_{k+1}}
=\frac{ q_{k+1}/2-\log 4-q_k}{2\pi q_{k+1}}\to\frac{1}{4\pi}
$$
as $k\to\infty$, and this proves~\eqref{1h}.

It remains to prove \eqref{3a}--\eqref{3d}.
In order to do this, we note first that
$q_{k+1}\geq 3q_0q_k/2$
and thus
$q_{k}\geq (3 q_0/ 2)^{k-j} q_j$
for $k>j$. Thus
$$
\sum_{j=0}^{k-1}q_j \leq
q_{k-1}\sum_{j=0}^{k-1} \left(\frac{2}{3q_0}\right)^{j}
$$
which implies that given $\varepsilon>0$ we can achieve
\begin{equation}\label{p3a}
\sum_{j=0}^{k-1}q_j \leq
(1+\varepsilon)q_{k-1}
\end{equation}
by choosing $q_0$ large.
In particular, $q_{k-2}\leq
\varepsilon q_{k-1}$.

We also note that the sequence $(a_k)$ tends to $\infty$ very
rapidly and using this it is not
difficult to see that we can achieve
\begin{equation}\label{8a}
\prod_{j=k+1}^{\infty}\left(1+\frac{4a_k}{a_j}\right)^{q_j}\leq 2
\quad\text{and}\quad
\prod_{j=k+1}^{\infty}\left(1-\frac{4a_k}{a_j}\right)^{q_j}\geq \frac12
\end{equation}
for all $k$ by choosing $q_0$ large. We can also  achieve
\begin{equation}\label{8b}
1+\frac{a_k}{4a_j}\leq a_k
\quad\text{and}\quad
\frac{a_k}{4a_j} -1\geq \frac{a_k}{5a_j}
\end{equation}
for $0\leq j<k$.
For $k\geq 1$ and $|z|=a_k/4$ we then have
$$
\begin{aligned}
|f(z)| & \leq  \frac{1}{16}a_k^2
\prod_{j=0}^{k-1}\left(1+\frac{a_k}{4a_j}\right)^{q_j}
\left(\frac54 \right)^{q_k}
\prod_{j=k+1}^{\infty}\left(1+\frac{a_k}{4a_j}\right)^{q_j}\\
& \leq \frac{1}{8}a_k^2 \prod_{j=0}^{k-1}a_k^{q_j}
\left(\frac54 \right)^{q_k}  \\
&= \frac{1}{8} \exp \left(2q_{k-1}+q_{k-1} \sum_{j=0}^{k-1}q_j
+q_k\log\frac54\right) \\
&\leq\frac{1}{8} \exp \left(2\eps q_k (1+\eps)q_{k-1}^2
+q_k\log\frac54\right) \\
&= \frac{1}{8} \exp
\left(\left(2\eps+(1+\eps)\frac23+\log\frac54\right)q_k\right)
\end{aligned}
$$
by~\eqref{p3a}, \eqref{8a} and~\eqref{8b}. Choosing  $\eps$ small, which we can achieve by
choosing $q_0$ large, we obtain
\begin{equation}\label{p3b}
|f(z)|\leq \frac18\exp q_k =\frac18 a_{k+1} \quad\text{for}\quad
|z|=\frac14 a_k.
\end{equation}

For $k\geq 1$ and $|z|=a_k/4$ we also have
$$
\begin{aligned}
|f(z)| & \geq  \frac{1}{16}a_k^2
\prod_{j=0}^{k-1}\left(\frac{a_k}{4a_j}-1\right)^{q_j} \left(\frac34
\right)^{q_k}
\prod_{j=k+1}^{\infty}\left(1-\frac{a_k}{4a_j}\right)^{q_j}\\
& \geq \frac{1}{32}a_k^2 \prod_{j=1}^{k-1}
\left(\frac{a_k}{5a_j}\right)^{q_j}
\left(\frac34 \right)^{q_k}  \\
&= \frac{a_k}{32} \exp \left(q_{k-1}+ \sum_{j=1}^{k-1}q_j
(q_{k-1}-q_{j-1}-\log 5) + q_k\log\frac34\right).
\end{aligned}
$$
by~\eqref{8a} and~\eqref{8b}.
Similarly as before we see that we also have
$$
q_{k-1}+ \sum_{j=1}^{k-1}q_j
(q_{k-1}-q_{j-1}-\log 5)
\geq q_k(1-\eps)\frac23
$$
for large $q_0$ and since we may
choose $\eps$ such that
$(1-\eps)2/3+\log(3/4)>0$,
we see that
\begin{equation}\label{p3c}
|f(z)|\geq 4 a_{k} \quad\text{for}\quad |z|=\frac14 a_k.
\end{equation}
For $k\geq 1$ and $|z|=4 a_k$ we have
$$|f(z)|\geq 16 a_k^2
\prod_{j=0}^{k-1}\left(\frac{4a_k}{a_j}-1\right)^{q_j} 3^{q_k}
\prod_{j=k+1}^{\infty}\left(1-\frac{4a_k}{a_j}\right)^{q_j}
$$
and thus
\begin{equation}\label{p3d}
|f(z)|\geq 8\cdot 3^{q_k}\geq  4 a_{k+1} \quad\text{for}\quad |z|=4
a_k
\end{equation}
by~\eqref{8a}.
If $|z|=4a_0$, then
$$|f(z)|\geq 16 a_0^2
 3^{q_0}
\prod_{j=1}^{\infty}\left(1-\frac{4a_0}{a_j}\right)^{q_j}\geq 8a_0^2
3^{q_0}= 8a_1 3^{q_0}\geq 4 a_1
$$
so that~\eqref{p3d} also holds for $k=0$. By~\eqref{p3b} we have
$|f(z)|\leq a_{k+2}/4$ for $|z|= a_{k+1}/4$. Since $a_{k+1}/4>4a_k$
for large $q_0$ we see that we also have
\begin{equation}\label{p3e}
|f(z)|\leq \frac14  a_{k+2} \quad\text{for}\quad |z|=4 a_k.
\end{equation}
Now~\eqref{3a} follows from~\eqref{p3b}--\eqref{p3e}.

For $k\geq 1$ and $|z|=\sqrt{a_k}$ we see similarly as before that
$$
\begin{aligned}
|f(z)|^2 & \leq a_k^2
\prod_{j=0}^{k-1}\left(1+\frac{\sqrt{a_k}}{a_j}\right)^{2q_j}
\left(1+\frac{1}{\sqrt{a_k}} \right)^{2q_k}  2\\
&\leq 2a_k^2 \prod_{j=0}^{k-1}a_k^{2q_j}
\left(1+\eps \right)^{2q_k} \\
&=\exp\left(\log 2+2q_{k-1} +q_{k-1}\sum_{j=0}^{k-1} q_j
+2q_k\log(1+\eps)\right) \\
&\leq \exp\left(\left( \frac23+\eps+2\log(1+\eps)\right)q_k\right) \\
&\leq a_{k+1}
\end{aligned}
$$
for large $q_0$ and small~$\eps$. This yields~\eqref{3b}.

The location of the critical points could  be determined by
Rouch\'e's theorem as in the proof of Theorem~\ref{thm4}.
Alternatively, to prove~\eqref{3c} it suffices that with
$$x_k=\frac{q_{k-1}}{2q_k}a_k
\quad\text{and}\quad
 y_k=\frac{2q_{k-1}}{q_k}a_k
 $$
 we have
 $$
 f'(x_k)>0
\quad\text{and}\quad f'(y_k)<0.
$$
Since $f(x)\geq 0$ for all $x\in\R$ it suffices to obtain these
inequalities with $f'$ replaced by $f'/f$. We have
$$
\frac{f'(z)}{f(z)} =\frac{2}{z} + \sum_{j=0}^\infty
\frac{q_j}{z-a_j}.
$$
Thus
$$
\frac{f'(x_k)}{f(x_k)} \geq \sum_{j=k-1}^\infty
\frac{q_j}{x_k-a_j}=\frac{q_{k-1}}{x_k-a_{k-1}}-\frac{q_k}{a_k-x_k}
 -\sum_{j=k+1}^\infty\frac{q_j}{a_j-x_k}.
$$
For large $q_0$ we have
$$
\frac{q_{k-1}}{x_k-a_{k-1}}
 =\frac{2q_k q_{k-1}}{q_{k-1}a_k-2q_k a_{k-1}}
\geq \frac74\frac{q_k}{a_k}
$$
while
$$
\frac{q_k}{a_k-x_k}
=\frac{2q_k^2}{2q_ka_k-q_{k-1}a_k}\leq\frac54\frac{q_k}{a_k}
$$
and
$$
\sum_{j=k+1}^\infty\frac{q_j}{a_j-x_k}\leq
 2\sum_{j=k+1}^\infty\frac{q_j}{a_j}\leq\frac14\frac{q_k}{a_k}.
$$
The last four inequalities yield that $f'(x_k)>0$. Similarly we find
for large $q_0$ that
$$
\begin{aligned}
\frac{f'(y_k)}{f(y_k)}
 &\leq \frac{2}{y_k}+ \sum_{j=0}^{k-1}
\frac{q_j}{y_k-a_j}-\frac{q_k}{a_k-y_k}\\
&=\frac{q_k}{q_{k-1}a_k}+ \sum_{j=0}^{k-1}
 \frac{q_jq_k}{2q_{k-1}a_k-q_ka_j}-\frac{q_k^2}{q_ka_k-2q_{k-1}a_k}\\
&\leq \frac{q_k}{q_{k-1}a_k}+\frac58
\sum_{j=0}^{k-1}\frac{q_jq_k}{q_{k-1}a_k}-\frac78\frac{q_k}{a_k} \\
&=\frac{q_k}{a_k} \left( \frac{1}{q_{k-1}} + \frac58
\frac{1}{q_{k-1}}\sum_{j=0}^{k-1}q_{k-1}-\frac78\right) \\
&<0
\end{aligned}
$$
and thus $f'(y_k)<0$. This completes the proof of~\eqref{3c}.

The right inequality of~\eqref{3d} follows from~\eqref{3a}
and~\eqref{3c}.
To prove the left one we note that
\begin{equation}\label{p3f}
\begin{aligned}
f(c_k)
&\geq
a_k^2 \prod_{j=0}^{k-1}\left(\frac{x_k}{a_j}-1\right)^{q_j}
\left(1-\frac{x_k}{a_k}\right)^{q_k}
\prod_{j=k+1}^\infty\left(1-\frac{x_k}{a_j}\right)^{q_j}
\\
&\geq
\left(\frac{x_k}{2a_{k-1}}\right)^{q_{k-1}}
\left(1-\frac{x_k}{a_k}\right)^{q_k}
\\
&=
\left(\frac{q_{k-1}a_k}{4q_ka_{k-1}}\right)^{q_{k-1}}
\left(1-\frac{2q_{k-1}}{q_k}\right)^{q_k}\\
&\geq
\left(\frac{q_{k-1}a_k}{4q_ka_{k-1}}\right)^{q_{k-1}}
\exp\left(-3q_{k-1}\right).
\end{aligned}
\end{equation}
Noting that
$$
q_{k}=\frac32 q_{k-1}^2 =\frac32 (\log a_k)^2
$$
and
$$
a_{k-1}=\exp q_{k-2} =\exp \sqrt{\frac23 q_{k-1}}
\leq \exp \sqrt{q_{k-1}}=\exp \sqrt{\log a_k}
$$
we see that for given $\delta>0$ we can achieve
$$
q_{k}\leq a_k^\delta
\quad\text{and}\quad
a_{k-1}\leq a_k^\delta
$$
by choosing $q_0$ large.
Thus, given $\eps>0$, we deduce from~\eqref{p3f}
that
$$f(c_k) \geq
a_{k}^{(1-\eps)q_{k-1}}\exp\left(-3q_{k-1}\right)
=\exp\left( (1-\eps)q_{k-1}^2-3q_{k-1}\right)
\geq \exp\left( (1-2\eps)\frac23 q_k\right)
$$
for large $q_0$. For small $\eps>0$ we thus have
$$
f(c_k)\geq \exp\left( \frac12 q_k\right)=\sqrt{a_{k+1}}.
$$
This completes the proof of~\eqref{3d} and thus the proof
of Theorem~\ref{thm2}.

\begin{remark}\label{cluster}
Let$f$ be an entire transcendental function with a
multiply connected wandering domain $U_0$ and put
$U_k=f^k(U_0)$ as before.
By Theorem~A, part~(iii), the $U_k$ are all bounded
and thus
$\partial U_k\cap C(\infty,U_k)$ is connected.
We call $\partial U_k\cap C(\infty,U_k)$ the \emph{outer boundary}
of $U_k$ and denote it by $\partial_\infty U_k$.
By Theorem~A, part~(ii), we have
$0\in C(0,U_k)$
for large $k$, and for such $k$
we call
$\partial U_k\cap C(0,U_k)$ the \emph{inner boundary}
of $U_k$ and denote it by $\partial_0 U_k$.
It is not difficult to see when $U_k$ is infinitely
connected, then the components of $\partial U_k$
must cluster at the inner or outer boundary (or both).
However, as we will explain below,
it can happen that they cluster at only one of them.

In the proof of Theorem~\ref{thm2} we have
shown that, with the terminology used there,
$X_k=D\left(0,\sqrt{a_{k+1}}\right)
\setminus C(0,U_k)\subset U_k$.
This implies that the components of
$\partial U_k$ do not cluster at the inner boundary
$\partial_0 U_k$:
it is an isolated part of the boundary in the sense that
$\dist(\partial_0 U_k,\partial U_k\setminus\partial_0 U_k)>0$.

On the other hand,
if $f$ is as in Theorem~\ref{thm4}, with $(P_k)$
is chosen such that
\begin{equation}\label{pr1}
\limsup_{k\to\infty}{kP_k} <\frac{|C|}{2e},
\end{equation}
then the outer boundary of $U_k$ is an isolated part of the
boundary. In fact, the proof of Theorem~\ref{thm4} shows
that~\eqref{pr1} implies that~\eqref{p4x} holds for all large~$k$.
We deduce that $S_{k+1}=\widetilde{U_{k+1}}\setminus
\overline{D(0,2^{k}r_{k+1})}$ contains no critical values. Thus the
components of $f^{-1}(S_{k+1})$ are doubly connected. Proceeding as
in the proof of~\eqref{p4e}, it is not difficult to see  that
$|f(z)|\geq 2^k r_{k+1}$ for $|z|=2^{k-1}r_k$. This implies that
there exists a component $T_k$ of $f^{-1}(S_{k+1})$  which
contains~$S_k$. It follows that $f(S_k)\subset f(T_k)=S_{k+1}$, and
this implies that $S_k\subset F(f)$. Thus the outer boundary
$\partial_\infty U_k$ of $U_k$ is isolated.

A similar reasoning shows that if
$$\liminf_{k\to\infty}{kP_k} >\frac{|C|}{2e},$$
then $\partial_0 U_k$ is isolated, and if
$$\liminf_{k\to\infty}{kP_k} <\frac{|C|}{2e}<\limsup_{k\to\infty}{kP_k},$$
then neither $\partial_0 U_k$ nor $\partial_\infty U_k$ is isolated.

These examples illustrate a result in~\cite{BRS} where
it is shown the inner or outer boundary are isolated
under a suitable hypothesis on the location of the
critical points.
\end{remark}

\section{Baker's example of an infinitely connected Fatou component}
\label{baker-ex}

In Baker's example~\eqref{1c}  the constant $C$ and the sequence
$(r_k)$ are chosen as follows. First let $0<C<1/(4e^2)$ and $r_1>1$.
Then choose $k_0$ such that $2^{k_0-1} C>2r_1$. Finally choose
$(r_k)$ such  that $r_{k+1}\geq 2r_k$ for $1\leq k< k_0$ and put
$$
r_{k+1}=C^2\prod_{j=1}^{k}\left(1+\frac{r_k}{r_j}\right)^2
$$
for $k\geq k_0$.

Baker showed that with
$$
s_k=\frac{k+1}{k+2}r_{k+1}
 \quad\text{and}\quad
B_k=\ann\left(r_k^2,s_k\right)
$$
 we have $f(B_k)\subset B_{k+1}$ for large $k$ and thus
$B_k\subset U_k$ for some multiply connected wandering
domain~$U_k$. In order to prove that $U_k$ is infinitely
connected, Baker proved that there exists a critical
point $c_{k+1}\in(-s_k,-r_k^2)\subset B_k$ for large~$k$.
The infinite connectivity then follows from Lemma~\ref{lemma1a}.

Analogously to~\eqref{p4i} we now find that
$$
c_k=\left(1-\frac{1}{k+\delta_k}\right)a_k,
$$
where $\delta_k\to 0$, and instead of~\eqref{p4j} we now obtain
$$
|f(c_k)|\sim \frac{C^2}{ek}
\prod_{j=1}^{k-1}\left(1+\frac{r_k}{r_j}\right)^2
=\frac{1}{4ek}
r_{k+1}.
$$
Similarly as in~\eqref{p4a} we also have
$r_{k+1}\geq 2^k r_k^2$.
It follows that $|f(c_k)|/r_k^2\to\infty$.
As $c_{k+1}$ is the only critical point in $U_k$,
it now follows from Theorem~\ref{thm1}, applied with
$A_{k+1}=\ann(r_{k+1}^2,|f(c_{k+1})|)\subset U_{k+1}$,
that $\partial U_k$ is not uniformly perfect.

\end{document}